\documentclass[12pt]{amsart}

\oddsidemargin 0mm
\evensidemargin 0mm
\topmargin 0mm
\textwidth 160mm
\textheight 230mm
\tolerance=9999

\usepackage[all,2cell,ps]{xy}
\usepackage{amssymb,amstext,amsmath,amscd,amsthm,amsfonts,enumerate,graphicx,latexsym,color}

\newtheorem{thm}{Theorem}[section]
\newtheorem{lem}[thm]{Lemma}
\newtheorem{prop}[thm]{Proposition}
\newtheorem{cor}[thm]{Corollary}
\newtheorem{ques}[thm]{Question}
\newtheorem{conj}[thm]{Conjecture}

\theoremstyle{definition}

\newtheorem{eg}[thm]{Example}

\newtheorem{statement}[thm]{}

\theoremstyle{remark}
\newtheorem{rmk}[thm]{Remark}

\newcommand{\ZZ}{\mathbb{Z}}

\newcommand{\QQ}{\mathbb{Q}}

\newcommand{\tensor}{\otimes}

\DeclareMathOperator{\Tor}{Tor}
\DeclareMathOperator{\Ext}{Ext}
\DeclareMathOperator{\Hom}{Hom}

 \DeclareMathOperator{\Spec}{Spec}

 \DeclareMathOperator{\pd}{pd}

 \DeclareMathOperator{\depth}{depth}

 \DeclareMathOperator{\htt}{height}
\def\p{\mathfrak{p}}

\newcommand{\E}{\textup{E}}

\def\urltilda{\kern -.15em\lower .7ex\hbox{\~{}}\kern .04em}\def\urldot{\kern -.10em.\kern -.10em}\def\urlhttp{http\kern -.10em\lower -.1ex\hbox{:}\kern -.12em\lower 0ex\hbox{/}\kern -.18em\lower 0ex\hbox{/}}

\def\Tr{\mathrm{Tr}}
\DeclareMathOperator{\RHom}{{\bf R}Hom}
\def\cext{\mathrm{\widehat{Ext}}\mathrm{}}
\def\lhom{\operatorname{\underline{Hom}}}

\def\ac{\mathsf{(AC)}}
\def\arc{\mathsf{(ARC)}}
\def\garcc{\mathsf{(GARC')}}
\def\garc{\mathsf{(GARC)}}
\def\sac{\mathsf{(SAC)}}
\def\sacc{\mathsf{(SACC)}}
\def\fed{\mathsf{(FED)}}
\def\hwc{\mathsf{(HWC)}}

\numberwithin{equation}{section}

\begin{document}
\baselineskip=15pt
\title{Auslander-Reiten conjecture and Auslander-Reiten duality}
\author{Olgur Celikbas}
\address{Department of Mathematics, 323 Mathematical Sciences Bldg, University of Missouri--Columbia, Columbia, MO 65211 USA}
\email{celikbaso@missouri.edu}
\urladdr{http://www.math.missouri.edu/~celikbaso/}
\author{Ryo Takahashi}
\address{Department of Mathematical Sciences, Faculty of Science, Shinshu University, 3-1-1 Asahi, Matsumoto, Nagano 390-8621, Japan/Department of Mathematics, University of Nebraska, Lincoln, NE 68588-0130, USA}
\curraddr{Graduate School of Mathematics, Nagoya University, Furocho, Chikusaku, Nagoya 464-8602, Japan}
\email{takahashi@math.nagoya-u.ac.jp}
\urladdr{http://www.math.nagoya-u.ac.jp/~takahashi/}
\date{\today}
%\dedicatory{This paper is dedicated to our advisors.}
\thanks{2010 {\em Mathematics Subject Classification.} Primary 13D07; Secondary 13C13, 13C14, 13H10}
\thanks{{\em Key words and phrases.} Auslander-Reiten conjecture, Auslander-Reiten duality, maximal Cohen-Macaulay module, totally reflexive module, Auslander condition, Auslander-Reiten condition, generalized Auslander-Reiten condition}
\thanks{The second author was partially supported by JSPS Grant-in-Aid for Young Scientists (B) 22740008 and by JSPS Postdoctoral Fellowships for Research Abroad}
\begin{abstract}
Motivated by a result of Araya, we extend the Auslander-Reiten duality theorem to Cohen-Macaulay local rings.
We also study the Auslander-Reiten conjecture, which is rooted in Nakayama's work on finite dimensional algebras.
One of our results detects a certain condition that forces the conjecture to hold over local rings of positive depth.
\end{abstract}
\maketitle

\section{Introduction}

The \emph{Auslander-Reiten conjecture} \cite{AR} claims that, over an Artin algebra $R$, if $M$ is a finitely generated $R$-module such that $\Ext^{>0}_{R}(M, M\oplus R)=0$, then $M$ is projective.
This long-standing conjecture can be stated over any Noetherian ring $R$ and is known to be true over several classes of algebras.
For example, Auslander and Reiten \cite{AR} proved the conjecture for algebras of finite representation type.
Hoshino \cite{Hos} determined another noteworthy class; it holds for symmetric Artin algebras with radical cube zero.
The Auslander-Reiten conjecture is closely related to other important conjectures such as the \emph{Tachikawa conjecture} \cite{T} and the \emph{finitistic dimension conjecture} \cite{Happel}.
Indeed it is rooted in a conjecture of Nakayama \cite{Nak} and the one known as the \emph{generalized Nakayama conjecture} \cite{AR}: If $R$ is an Artin algebra, then every indecomposable injective $R$-module occurs as a direct summand of one of the terms of the minimal injective resolution of $R$.
We refer the reader to \cite{ABS,Buchweitz2,CH1,Di,DP,Wei2,Wei1,Yamagata} for more information on the relation of these homological conjectures.

Christensen and Holm \cite{CH1} proved that the Auslander-Reiten conjecture holds over rings satisfying the Auslander condition.
More precisely, they proved that the implication
$$
\xymatrix{
\ac \ar@{=>}[r] & \arc
}
$$
holds for a (left) Noetherian ring.
Here, $\ac$ and $\arc$ denote the {\em Auslander condition} and the {\em Auslander-Reiten condition}, which are conditions defined for a fixed Noetherian ring $R$, as follows.
\begin{quote}
\begin{enumerate}
\item[$\ac$]
For every finitely generated $R$-module $M$ there exists a nonnegative integer $b_{M}$ such that for every finitely generated $R$-module $N$ one has: if $\Ext^{\gg 0}_{R}(M,N)=0$, then $\Ext^{>b_{M}}_{R}(M,N)=0$.
\item[$\arc$]
For every finitely generated $R$-module $M$, if $\Ext^{>0}_{R}(M,M\oplus R)=0$, then $M$ is projective.
\end{enumerate}
\end{quote}
Although there exist rings that do not satisfy $\ac$ \cite{JoS}, this result gave new insight on the Auslander-Reiten conjecture and other related homological conjectures.

Commutative Noetherian local rings known to satisfy $\arc$ include Gorenstein local rings of codimension at most four \cite{S}.
Recently Araya \cite{A} proved that if all Gorenstein local rings of dimension at most one satisfy $\arc$ so do all Gorenstein local rings.
The main tool he used for the proof of his result is a duality theorem for Gorenstein local rings, which is known as the {\em Auslander-Reiten duality} theorem (cf. \cite{Au2,Buchweitz,Y}).
On the other hand, as a natural generalization of $\ac$, the following {\em generalized Auslander-Reiten condition} has also been investigated by several authors \cite{Di,Wei2,Wei1}.
\begin{quote}
\begin{enumerate}
\item[$\garc$]
For every finitely generated $R$-module $M$, if there exists a nonnegative integer $n$ such that $\Ext^{>n}_{R}(M,M\oplus R)=0$, then $M$ has projective dimension at most $n$.
\end{enumerate}
\end{quote}

In this paper, we consider refining the Auslander-Reiten duality theorem and the Christensen-Holm theorem stated above.

First, we shall prove the following theorem, which extends the Auslander-Reiten duality theorem to arbitrary Cohen-Macaulay local rings and deduces Araya's main observation from a more general context.
\begin{thm}
Let $(R,\mathfrak{m},k)$ be a $d$-dimensional Cohen-Macaulay local ring with a canonical module $\omega$. 
Let $X$ be a totally reflexive $R$-module, and let $M$ be a maximal Cohen-Macaulay $R$-module that is locally free on the punctured spectrum of $R$.
Then, for all integers $i$ one has an isomorphism
$$
\cext_R^i(X,M)\cong\cext_R^{(d-1)-i}(X^\ast,M^\dag)^\vee,
$$
where $(-)^\ast=\Hom_R(-,R)$, $(-)^\dag=\Hom_R(-,\omega)$, $(-)^\vee=\Hom_R(-,\mathrm{E}_R(k))$ and $\cext$ denotes Tate cohomology.
\end{thm}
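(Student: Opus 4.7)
The proof will compute both sides as cohomologies of explicit complexes built from a complete resolution of $X$, link those complexes via Hom-tensor adjunction, and then evaluate using local duality for finite length modules. Let $T$ denote a complete resolution of $X$, namely an exact complex of finitely generated free $R$-modules realising $X$ as a designated syzygy. Because $X$ is totally reflexive, the $R$-dual $\Hom_R(T,R)$ is again an exact complex of finitely generated free modules; after a standard reindexing this becomes a complete resolution of $X^\ast$, which I shall denote $T^\ast$. The built-in shift in this reindexing is what will produce the $-1$ in the target exponent $(d-1)-i$.

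The key chain-level identity is the degreewise isomorphism
\[
\Hom_R\bigl(\Hom_R(T_j,R),M^\dag\bigr)\cong\Hom_R\bigl(\Hom_R(T_j,M),\omega\bigr),
\]
obtained from the adjunction $\Hom_R(A,\Hom_R(M,\omega))\cong\Hom_R(A\otimes_R M,\omega)$ together with the canonical identification $\Hom_R(T_j,R)\otimes_R M\cong\Hom_R(T_j,M)$, valid since $T_j$ is finitely generated free. Assembling these degreewise and tracking the reindexing yields an isomorphism of complexes
\[
\Hom_R(T^\ast,M^\dag)\cong\Hom_R(\Hom_R(T,M),\omega)[1].
\]
I would then verify that every $\cext^i_R(X,M)=H^i\Hom_R(T,M)$ has finite length: localising at a prime $\p\ne\mathfrak{m}$ makes $M_\p$ free, so $\Hom_{R_\p}(T_\p,M_\p)$ is a finite direct sum of copies of $\Hom_{R_\p}(T_\p,R_\p)$, which is exact by total reflexivity of $X_\p$.

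The cohomology of $\Hom_R(\Hom_R(T,M),\omega)$ is now read off via local duality. Each component $\Hom_R(T_j,M)$ is a finite direct sum of copies of $M$, and $\Ext^{>0}_R(M,\omega)=0$ since $M$ is maximal Cohen-Macaulay; hence $\Hom_R(\Hom_R(T,M),\omega)$ represents $\RHom_R(\Hom_R(T,M),\omega)$. The hypercohomology spectral sequence
\[
E_2^{p,q}=\Ext^p_R\bigl(H^{-q}\Hom_R(T,M),\omega\bigr)\Longrightarrow H^{p+q}\RHom_R(\Hom_R(T,M),\omega)
\]
collapses onto the row $p=d$ because each $H^{-q}$ is finite length and local duality gives $\Ext^p_R(L,\omega)\cong L^\vee$ for $p=d$ and $0$ otherwise on such $L$. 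Combining this with the shift above gives
\[
\cext^i_R(X^\ast,M^\dag)\cong H^{i+1}\RHom_R(\Hom_R(T,M),\omega)\cong\cext^{d-1-i}_R(X,M)^\vee,
\]
and Matlis dualising both sides (each of finite length) yields the stated isomorphism. The main technical obstacle is carefully tracking the reindexing that identifies $T^\ast$ as a complete resolution of $X^\ast$ and then following the corresponding shift through the chain-level isomorphism; this bookkeeping is what forces the exponent $d-1-i$ rather than the naive $d-i$.
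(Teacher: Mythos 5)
Your argument is correct, but it is a genuinely different route from the paper's. The paper works entirely at the module level: it writes $\cext_R^i(X,M)=\lhom_R(\Omega^iX,M)$, observes this has finite length and hence equals its own $\LC^0_{\mathfrak m}$, applies local duality to get $\Ext_R^d(\lhom_R(\Omega^iX,M),\omega)^\vee$, invokes a duality of Yoshino relating this to $\Ext_R^1(M,(\Omega^d\Tr\Omega^iX)^\dag)$, swaps the arguments using the canonical-module symmetry $\Ext^i(M,N^\dag)\cong\Ext^i(N,M^\dag)$ for MCM modules, and then unwinds everything using $\Omega^2\Tr Y\overset{\rm st}{\cong}Y^\ast$, the Tate shift rules, and the stable isomorphism $\Omega^{-i}(X^\ast)\overset{\rm st}{\cong}(\Omega^iX)^\ast$. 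You instead stay at the chain level on a complete resolution $T$: the Hom--tensor adjunction $\Hom_R(\Hom_R(T_j,R),M^\dag)\cong\Hom_R(\Hom_R(T_j,M),\omega)$ plays the role that the swap lemma plays in the paper; the reindexing of $\Hom_R(T,R)$ as a complete resolution of $X^\ast$ plays the role of Lemma \ref{b}; and a hyper-$\Ext$ spectral sequence together with local duality for finite-length modules replaces the combination of \cite[(3.5.9)]{BH} and Yoshino's \cite[(3.10)]{Y}. The trade-offs: the paper's route is modular and stays within bounded homological algebra, black-boxing the subtle bookkeeping into cited results; yours is more self-contained and makes the origin of the shift to $d-1-i$ explicit, but at the cost of running a double-complex spectral sequence on an unbounded complex. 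You should justify the convergence of that spectral sequence --- it works here because $\omega$ has finite injective dimension $d$, so the relevant double complex $\Hom_R(\Hom_R(T_{-\bullet},M),I^\bullet)$ is bounded in the $I$-direction and both filtration spectral sequences collapse to a single row or column --- and you should be explicit that ``$\RHom$ of an unbounded complex'' is shorthand for this total complex rather than a derived-category object. Modulo writing out the sign/reindexing bookkeeping you flag yourself, the argument is sound.
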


Next, we define the following conditions over a commutative Noetherian local ring $R$.
\begin{quote}
\begin{enumerate}
\item[$\sac$]
For every finitely generated $R$-module $M$, if $\Ext^{>0}_{R}(M,R)=0$ and $\Ext^{\gg 0}_{R}(M,M)=0$, then $\Ext^{>0}_{R}(M,M)=0$.
\item[$\sacc$]
For every finitely generated $R$-module $M$ that has constant rank, if $\Ext^{>0}_{R}(M,R)=0$ and $\Ext^{\gg 0}_{R}(M,M)=0$, then $\Ext^{>0}_{R}(M,M)=0$.
\end{enumerate}
\end{quote}
We shall prove the following, which supplements the implication shown by Christensen and Holm with the conditions introduced above.
\begin{thm}
For each commutative Noetherian local ring $R$, the implications
$$
\xymatrix{
\ac \ar@{=>}[r] & \sac \ar@{=>}[r] & \sacc \ar@{=>}[r]_{(\ast)} & \arc \\
& \garc \ar@{<=>}[u]
}
$$
hold, where $R$ is assumed to have positive depth for $(\ast)$.
\end{thm}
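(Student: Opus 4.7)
My plan is to establish the five implications $\ac\Rightarrow\sac$, $\sac\Rightarrow\sacc$, $\garc\Rightarrow\sac$, $\sac\Rightarrow\garc$, and $\sacc\Rightarrow_{(\ast)}\arc$. The first three follow from syzygy dimension-shifting combined with the standard fact that the dual of a minimal free resolution is itself minimal; the last two are the more delicate directions, and $\sacc\Rightarrow\arc$ is the main obstacle.

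For $\ac\Rightarrow\sac$: given $M$ with $\Ext^{>0}_R(M,R)=0$ and $\Ext^{\gg 0}_R(M,M)=0$, the syzygy short exact sequences $0\to\Omega^{j+1}M\to F_j\to\Omega^j M\to 0$ combined with $\Ext^{\geq 1}_R(M,F_j)=0$ yield, by iteration, $\Ext^{i+k}_R(M,\Omega^k M)\cong\Ext^i_R(M,M)$ for $i\geq 1$. Eventual vanishing of $\Ext_R(M,M)$ transfers to eventual vanishing of $\Ext_R(M,\Omega^k M)$, so $\ac$ produces a bound $b_M$, independent of $k$, with $\Ext^{>b_M}_R(M,\Omega^k M)=0$; reading back through the isomorphism for $k\geq b_M$ gives $\Ext^{>0}_R(M,M)=0$. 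The arrow $\sac\Rightarrow\sacc$ is immediate by definition. For $\garc\Rightarrow\sac$, the $\sac$ hypothesis entails $\Ext^{\gg 0}_R(M,M\oplus R)=0$, so $\garc$ gives $\pd_R M<\infty$; because the dual of a minimal free resolution is minimal, $\Ext^{\pd_R M}_R(M,R)\neq 0$ whenever $\pd_R M>0$, so $\Ext^{>0}_R(M,R)=0$ forces $\pd_R M=0$ and hence $M$ is free. The reverse $\sac\Rightarrow\garc$ is the analogous shift: given $\Ext^{>n}_R(M,M\oplus R)=0$, pass to $N=\Omega^n M$, which satisfies $\Ext^{>0}_R(N,N\oplus R)=0$ after dimension-shifting, and then extract $\pd_R N=0$ by applying $\sac$ to $N$ (and to its syzygies as needed) in combination with the minimal-resolution argument just above.

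The main obstacle is $\sacc\Rightarrow\arc$ under $\depth R\geq 1$. The $\arc$ hypothesis $\Ext^{>0}_R(M,M\oplus R)=0$ already supplies the Ext vanishing that $\sacc$ would conclude, so $\sacc$ must be applied to an auxiliary constant-rank module rather than to $M$ itself. My strategy is to pick a regular element $x\in R$, available thanks to $\depth R\geq 1$ and a unit at every minimal prime of $R$, and use $x$ to control the behavior of $M_\p$ over the Artinian local rings $R_\p$. From this one constructs a constant-rank companion module $N$, assembled from $M$, $M^\ast$, and suitable syzygies of $M$, whose Ext hypotheses follow from those of $M$; $\sacc$ then applies to $N$, and a splitting argument tied to $x$ transports the conclusion back to the freeness of $M$. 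The design of $N$ and the final bridge from the Ext vanishing of $N$ to the freeness of $M$ is where I expect the bulk of the technical work to lie.
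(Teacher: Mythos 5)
Your handling of $\ac\Rightarrow\sac$, $\sac\Rightarrow\sacc$, and $\garc\Rightarrow\sac$ is essentially sound; the paper simply cites Christensen--Holm for the first and grounds the nonvanishing of $\Ext^{\pd M}_R(M,R)$ in a Nakayama argument from Matsumura, but the substance is the same. Your sketch of $\sac\Rightarrow\garc$, however, has a real gap: applying $\sac$ to $N=\Omega^nM$, and separately to its syzygies, produces $\Ext^{>0}_R(N,N)=0$ and $\Ext^{>1}_R(N,\Omega N)=0$, which do \emph{not} force $\Ext^1_R(N,\Omega N)=0$ (indeed $\Ext^1_R(N,\Omega N)\cong\lhom_R(N,N)$, and its vanishing is equivalent to $N$ being free, which is exactly what one is trying to prove). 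The paper closes this by applying $\sac$ to the single module $X=N\oplus\Omega N$; the cross term $\Ext^1_R(N,\Omega N)$ is then a direct summand of $\Ext^1_R(X,X)$, a short dimension-shift check verifies $\Ext^{>n}_R(X,X\oplus R)=0$, so $\sac$ gives $\Ext^{>0}_R(X,X)=0$ and hence $N$ is free.

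For $\sacc\Rightarrow\arc$, which you correctly flag as the crux, your strategy does not obviously work: a syzygy $\Omega_R M$ of an arbitrary $R$-module is torsion-free but need not be free at the associated primes of $R$, so constant rank is not automatic for any companion module built from $M$ directly over $R$. The paper instead detours through a hypersurface section $S=R/xR$, proving two deformation statements for a nonzerodivisor $x$: (a) if $R$ satisfies $\sacc$ then $S$ satisfies $\arc$, and (b) if $S$ satisfies $\sac$ (resp.\ $\arc$) then so does $R$; composing (a) and (b) yields the corollary. Statement (a) is exactly where constant rank becomes available: for an $S$-module $M$ one has $xM=0$, hence $M_\p=0$ at every associated prime $\p$ of $R$ (since $x\notin\p$), so the $R$-syzygy $\Omega_R M$ is locally free of constant rank there; a snake-lemma splitting $\Omega_R M/x\Omega_R M\cong M\oplus\Omega_S M$ (valid since $\Ext^2_S(M,M)=0$) then transfers the $S$-Ext vanishing to $R$-Ext vanishing, and $\sacc$ applies to $\Omega_R M$. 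Statement (b) is a change-of-rings argument comparing Ext over $R$ and over $S$. Without passing to $R/xR$, there is no evident way to secure the constant-rank hypothesis.
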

\noindent
Here, the assumption in $(\ast)$ that $R$ has positive depth is reasonable, because $R$ satisfies $\sacc$ trivially if $\depth R=0$.

We also give an application of Araya's result and pose a question concerning the Auslander-Reiten conjecture for modules having bounded Betti numbers.
We finish the article by discussing in Section 5 the relation of the Auslander-Reiten conjecture with a question of Huneke and Wiegand on the tensor products of modules.

\section{Basic definitions}

This section is devoted to giving basic definitions and their basic properties which will often be used later.

\begin{statement}
Throughout this paper, $R$ denotes a left Noetherian ring, and modules considered over $R$ are finitely generated left modules.
Unless otherwise specified, $R$ is assumed to be a commutative local ring with unique maximal ideal $\mathfrak{m}$ and residue field $k=R/\mathfrak m$.
\end{statement}

\begin{statement}
For a given $R$-module $M$, we set
$$
M^\ast=\Hom_R(M,R),\ M^{\vee}=\Hom_{R}(M, \E_{R}(k)),\ M^{\dag}=\Hom_{R}(M, \omega_{R}),
$$
where $\E_{R}(k)$ is the injective hull of $k$, and $\omega_{R}$ is a canonical module (in the case where $R$ is a Cohen-Macaulay ring that is a homomorphic image of a Gorenstein local ring).
Clearly, $M^{\dag}=M^{\ast}$ when $R$ is Gorenstein.
\end{statement}

\begin{statement}
We denote by $U_R$ the {\em punctured spectrum} of $R$, i.e., $U_{R}=\Spec R\setminus\{ \mathfrak{m}\}$.
We say that $M$ is {\em locally free on $U_R$} provided that $M_{p}$ is a free $R_{p}$-module for all $p\in U_{R}$.
The ring $R$ is called an {\em isolated singularity} if $R_{p}$ is a regular local ring for all $p\in U_{R}$.
\end{statement}

\begin{statement}
We use the notation $\overset{\rm st}{\cong}$ to denote a \emph{stable isomorphism}, that is, for two $R$-modules $M,N$ we write $M\overset{\rm st}{\cong}N$ if $M\oplus F\cong N\oplus G$ for some free $R$-modules $F,G$.
\end{statement}

\begin{statement}\label{2.5}
Let $M$ be an $R$-module.
For an integer $i>0$, we denote by $\Omega^{i}M$ the $i$-th {\em syzygy} of $M$, namely, the image of the $i$-th differential map in a minimal free resolution of $M$.
As a convention, we set $\Omega^0M=M$.
For an $R$-module $M$, we denote by $\Tr M$ the {\em (Auslander) transpose} of $M$, which is defined as the cokernel of the $R$-dual map $\delta_1^\ast$ of the first differential map $\delta_1$ in a minimal free resolution of $M$.
Here are some remarks:
\begin{enumerate}[(1)]
\item
The modules $\Omega^iM$ and $\Tr M$ are uniquely determined up to isomorphism, since so is a minimal free resolution of $M$.
\item
There is a stable isomorphism $\Omega^2\Tr M\overset{\rm st}{\cong}M^\ast$.
\end{enumerate}
\end{statement}

\begin{statement}
An $R$-module $X$ is called \emph{totally reflexive} if the natural homomorphism $X\to X^{\ast\ast}$ is an isomorphism and $\Ext^i_{R}(X,R)=\Ext_R^i(X^{\ast},R)=0$ for all $i>0$.
Here are several properties of totally reflexive modules which we will use.
For the details, we refer to \cite{AuBr} and \cite{Ch}.
\begin{enumerate}[(1)]
\item
A totally reflexive module over a Cohen-Macaulay local ring is maximal Cohen-Macaulay.
Over a Gorenstein local ring, the totally reflexive modules are precisely the maximal Cohen-Macaulay modules.
\item
For a totally reflexive $R$-module $X$ and an integer $i>0$, we define the $i$-th {\em cosyzygy} of $X$, denoted by $\Omega^{-i}X$, to be the image of the $R$-dual map $\delta_i^\ast$ of the $i$-th differential map in a minimal free resolution of $X^\ast$.
We have $\Omega^i\Omega^jX\overset{\rm st}{\cong}\Omega^{i+j}X$ for all integers $i,j$.
\item
If $X$ is a totally reflexive $R$-module, then so are $X^\ast$, $\Tr X$ and $\Omega^iX$ for all integers $i$.
\end{enumerate}
\end{statement}

\begin{statement}
For $R$-modules $M,N$, we denote by $\lhom_R(M,N)$ the residue $R$-module of $\Hom_R(M,N)$ by the $R$-submodule consisting of the homomorphisms from $M$ to $N$ that factor through free modules.
Here are some remarks:
\begin{enumerate}[(1)]
\item
We have $\lhom_R(M,N)\cong\Tor_1^R(\Tr M,N)$; see \cite[(3.9)]{Y}.
\item
If $M$ or $N$ is locally free on $U_R$, then the $R$-module $\lhom_R(M,N)$ has finite length.
\end{enumerate}
\end{statement}

\begin{statement}\label{2.8}
Let $M$ be an $R$-module, and let $X$ be a totally reflexive $R$-module.
We denote by $\cext_R^i(X,M)$ the $i$-th {\em Tate cohomology} module, which is defined as
$$
\cext_R^i(X,M)=\lhom_R(\Omega^iX,M).
$$
We will use the following properties of Tate cohomology.
We can find details of Tate cohomology, for instance, in \cite[\S 7]{catgp} (the paper \cite{catgp} assumes that the base local ring is Henselian, but this assumption is unnecessary in \cite[\S 7]{catgp}).
\begin{enumerate}[(1)]
\item
One has $\cext_R^i(X,M)\cong\Ext_R^i(X,M)$ for all positive integers $i$.
\item
One has $\cext_R^i(\Omega^jX,M)\cong\cext_R^{i+j}(X,M)$ for all integers $i,j$.
\item
If $M$ is also totally reflexive, then $\cext_R^i(X,\Omega^jM)\cong\cext_R^{i-j}(X,M)$ for all integers $i,j$.
\end{enumerate}
\end{statement}

\section{A generalization of Auslander-Reiten duality}

Araya \cite[Theorem 8]{A} made use of the Auslander-Reiten duality theorem over Gorenstein local rings \cite{Y} and proved the following theorem:

\begin{thm}[Araya]\label{Araya}
Let $R$ be a $d$-dimensional Gorenstein local ring and let $M$ be a maximal Cohen-Macaulay $R$-module that is locally free on $U_R$. If $\cext_{R}^{d-1}(M,M)=0$, then $M$ is free.
\end{thm}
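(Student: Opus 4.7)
The strategy is to derive Theorem~\ref{Araya} as a direct corollary of the generalized Auslander-Reiten duality in Theorem~1.1 above. Since $R$ is Gorenstein we may take $\omega_R=R$, so that $(-)^{\dag}$ and $(-)^{\ast}$ coincide, and every maximal Cohen-Macaulay $R$-module is totally reflexive; in particular $M$ is totally reflexive. Because the formation of $M^\ast=\Hom_R(M,R)$ commutes with localization and $M$ is locally free on $U_R$, the module $M^\ast$ is in turn totally reflexive and locally free on $U_R$, so both $M$ and $M^\ast$ fit the hypotheses of Theorem~1.1.

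I would then apply Theorem~1.1 with $X:=M$ and with $M$ also in the second slot, at degree $i=d-1$. This yields the isomorphism
$$
\cext_R^{d-1}(M,M)\;\cong\;\cext_R^{0}(M^\ast,M^\ast)^{\vee}\;=\;\lhom_R(M^\ast,M^\ast)^{\vee},
$$
where the final equality is the definition of Tate cohomology in degree zero together with $\Omega^0M^\ast=M^\ast$. Since $M^\ast$ is locally free on $U_R$, the module $\lhom_R(M^\ast,M^\ast)$ has finite length by the basic property of $\lhom$ recalled in Section~2, and Matlis duality is a faithful exact functor on finite length modules. Hence the hypothesis $\cext_R^{d-1}(M,M)=0$ forces $\lhom_R(M^\ast,M^\ast)=0$.

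The vanishing $\lhom_R(M^\ast,M^\ast)=0$ means exactly that $\id_{M^\ast}$ factors through some free $R$-module $F$; reading off a factorization $\id_{M^\ast}=\beta\circ\alpha$ with $\alpha\colon M^\ast\to F$, we see that $M^\ast$ is a direct summand of $F$ and is therefore free over the local ring $R$. Finally, since $M$ is totally reflexive we have $M\cong M^{\ast\ast}$, so $M$ itself is free, completing the argument. The only substantive input in this plan is the invocation of Theorem~1.1, so the real work — and the main obstacle — is establishing that generalized Auslander-Reiten duality rather than carrying out the short deduction above.
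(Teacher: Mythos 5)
Your deduction is correct and takes exactly the approach the paper intends: the paper states Theorem~\ref{Araya} with a citation to Araya and announces that its Theorem~\ref{d} generalizes the duality underlying Araya's argument, but never writes out the specialization — your proof fills that gap correctly. One tiny simplification: applying Corollary~\ref{ARD} (the Gorenstein case of the duality) directly with $X=M$ and $i=d-1$ gives $\cext_R^{d-1}(M,M)\cong\lhom_R(M,M)^\vee$, so the vanishing forces $\id_M$ to factor through a free module and $M$ is free, avoiding the detour through $M^\ast$ and $M^{\ast\ast}\cong M$.
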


It is a consequence of a result of Araya \cite{A} that if all one dimensional Gorenstein rings satisfy the Auslander-Reiten conjecture discussed in the introduction, so do all Gorenstein local rings.
Our aim in this section is to deduce Theorem \ref{Araya} from a more general result, Theorem \ref{d}, that holds over arbitrary Cohen-Maculay local rings.
Such a generalization could be useful for researchers who wish to further study the Auslander-Reiten conjecture. It also seems interesting in itself, as it concerns the vanishing of cohomology over Cohen-Macaulay local rings. We start with a few preliminary results:

\begin{lem}\label{a}
\begin{enumerate}[\rm(1)]
\item
Let $R$ be a Cohen-Macaulay local ring with a canonical module $\omega$ and let $M$ and $N$ be maximal Cohen-Macaulay $R$-modules.
Then $\Ext_R^i(M,N^\dag)\cong\Ext_R^i(N,M^\dag)$ for all $i\in\ZZ$.
\item
Let $R$ be a local ring and let $X$ and $Y$ be totally reflexive $R$-modules.
Then $\Ext_R^i(X^\ast,Y^\ast)\cong\Ext_R^i(Y,X)$ for all $i\in\ZZ$.
\end{enumerate}
\end{lem}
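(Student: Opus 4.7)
For both parts the strategy is the same: use derived-category adjunction together with symmetry of the derived tensor product, and exploit the relevant $\Ext$-vanishing hypothesis to identify a certain $\RHom$ with an honest $\Hom$. Concretely, the basic identities I will use are
\[
\RHom_R(A,\RHom_R(B,C))\simeq \RHom_R(A\tensor_R^{\mathbf L}B,C)\quad\text{and}\quad A\tensor_R^{\mathbf L}B\simeq B\tensor_R^{\mathbf L}A,
\]
valid in $D(R)$ for any $R$-modules $A,B,C$, and I will take $H^i$ at the end.

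For part (1), since $M$ and $N$ are maximal Cohen-Macaulay and $\omega$ is a canonical module, one has $\Ext_R^{>0}(M,\omega)=\Ext_R^{>0}(N,\omega)=0$, so $\RHom_R(M,\omega)\simeq M^\dag$ and $\RHom_R(N,\omega)\simeq N^\dag$ in $D(R)$. Substituting $N^\dag\simeq\RHom_R(N,\omega)$, adjoining, swapping tensor factors, and adjoining back yields
\[
\RHom_R(M,N^\dag)\simeq \RHom_R(M\tensor_R^{\mathbf L}N,\omega)\simeq \RHom_R(N\tensor_R^{\mathbf L}M,\omega)\simeq \RHom_R(N,M^\dag).
\]
Taking $i$-th cohomology gives the desired natural isomorphism $\Ext_R^i(M,N^\dag)\cong\Ext_R^i(N,M^\dag)$ for all $i\geq 0$ (and trivially for $i<0$).

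For part (2) the argument is parallel, with $R$ playing the role of $\omega$. Total reflexivity of $X$ and $Y$ gives $\Ext_R^{>0}(X,R)=\Ext_R^{>0}(X^\ast,R)=\Ext_R^{>0}(Y,R)=0$ and $X\cong X^{\ast\ast}$, so $\RHom_R(Y,R)\simeq Y^\ast$ and $\RHom_R(X^\ast,R)\simeq X^{\ast\ast}\cong X$ in $D(R)$. Then
\[
\RHom_R(X^\ast,Y^\ast)\simeq\RHom_R(X^\ast\tensor_R^{\mathbf L}Y,R)\simeq\RHom_R(Y\tensor_R^{\mathbf L}X^\ast,R)\simeq\RHom_R(Y,X),
\]
and taking $H^i$ delivers the claim.

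There is no real obstacle here; the only care needed is to verify that each $\RHom$ appearing really does reduce to an ordinary $\Hom$ (equivalently, that the complex is concentrated in degree zero), which is precisely what the maximal Cohen-Macaulay / totally reflexive hypotheses guarantee via the vanishing of positive $\Ext$ into $\omega$ or $R$. If one prefers to avoid derived categories altogether, the same computation can be carried out by taking a projective resolution $P_\bullet\to M$ (resp.\ $P_\bullet\to X^\ast$) and invoking the tensor-hom adjunction $\Hom_R(P_\bullet,\Hom_R(N,\omega))\cong\Hom_R(P_\bullet\tensor_R N,\omega)$ together with a double-complex / spectral sequence argument, but the derived formulation is cleaner and makes the symmetry between the two sides manifest.
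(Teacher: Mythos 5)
Your proof is correct and follows essentially the same route as the paper: both arguments replace $N^\dag$ (resp.\ $Y^\ast$, $X^{\ast\ast}$) by the corresponding $\RHom$ using the relevant $\Ext$-vanishing, apply tensor-hom adjunction and commutativity of $\tensor^{\bf L}$, and then take cohomology. The paper cites Christensen's book for the adjunction isomorphisms rather than spelling out the justification you give, but the content is the same.
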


\begin{proof}
(1) The following isomorphisms hold in the derived category of $R$:
\begin{align*}
\RHom_R(M,N^\dag) & \cong \RHom_R(M,\RHom_R(N,\omega))
\cong \RHom_R(M\tensor_R^{\bf L}N,\omega)\\
& \cong \RHom_R(N,\RHom_R(M,\omega))
\cong \RHom_R(N,M^\dag).
\end{align*}
Here, the second and third isomorphisms follow from \cite[(A.4.21)]{Ch}.
One deduces the required isomorphism by taking the $i$-th cohomology.

(2) The conclusion follows from the following isomorphisms:
\begin{align*}
\RHom_R(X^\ast,Y^\ast) & \cong \RHom_R(X^\ast,\RHom_R(Y,R))
\cong \RHom_R(X^\ast\tensor_R^{\bf L}Y,R) \\
& \cong \RHom_R(Y,\RHom_R(X^\ast,R))
\cong \RHom_R(Y,X).
\end{align*}
\end{proof}

\begin{lem}\label{b}
Let $R$ be a local ring and let $X$ be a totally reflexive $R$-module.
Then $\Omega^{-i}(X^\ast)\overset{\rm st}{\cong}(\Omega^iX)^\ast$ for all $i\in\ZZ$.
\end{lem}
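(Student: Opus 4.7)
The plan is to handle the case $i>0$ explicitly from a minimal free resolution of $X$ and then reduce the cases $i=0$ and $i<0$ to it.

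First I would fix a minimal free resolution $\cdots\to F_1\xrightarrow{\delta_1}F_0\to X\to 0$ of $X$. Total reflexivity gives $X\cong X^{\ast\ast}$, so this sequence is also a minimal free resolution of $(X^\ast)^\ast$. Hence, by the definition of cosyzygy in 2.6(2) applied to the totally reflexive module $X^\ast$, for $i>0$ one has $\Omega^{-i}(X^\ast)=\operatorname{im}(\delta_i^\ast\colon F_{i-1}^\ast\to F_i^\ast)$.

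Next I would factor $\delta_i$ as $F_i\twoheadrightarrow\Omega^iX\hookrightarrow F_{i-1}$ using the short exact sequences $0\to\Omega^{i+1}X\to F_i\to\Omega^iX\to 0$ and $0\to\Omega^iX\to F_{i-1}\to\Omega^{i-1}X\to 0$. Every syzygy $\Omega^jX$ is again totally reflexive by 2.6(3), so $\Ext^{>0}_R(\Omega^jX,R)=0$, which means that dualizing these short exact sequences preserves exactness. The induced factorization $F_{i-1}^\ast\twoheadrightarrow(\Omega^iX)^\ast\hookrightarrow F_i^\ast$ then identifies $\operatorname{im}(\delta_i^\ast)$ with $(\Omega^iX)^\ast$, proving $\Omega^{-i}(X^\ast)\cong(\Omega^iX)^\ast$ for all $i>0$. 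The case $i=0$ is trivial.

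For $i<0$, I would write $i=-j$ with $j>0$ and apply the case just established to the totally reflexive module $X^\ast$, obtaining $\Omega^{-j}((X^\ast)^\ast)\cong(\Omega^jX^\ast)^\ast$. Using $X^{\ast\ast}\cong X$ and dualizing both sides — legitimate because $\Omega^jX^\ast$ is totally reflexive, hence reflexive — gives $(\Omega^{-j}X)^\ast\cong\Omega^j(X^\ast)$, which is precisely the desired $\Omega^{-i}(X^\ast)\overset{\rm st}{\cong}(\Omega^iX)^\ast$ in the case $i<0$. The only real obstacle is keeping track of exactness through the various dualizations, but this is uniformly controlled by the total reflexivity of $X$ and all its (co)syzygies.
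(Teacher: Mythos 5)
Your proof is correct and follows essentially the same route as the paper: both arguments hinge on the fact that total reflexivity makes the $R$-dual of the truncated minimal free resolution $0 \to \Omega^iX \to F_{i-1} \to \cdots \to F_0 \to X \to 0$ exact, and both handle $i<0$ by replacing $X$ with $X^\ast$ and dualizing. The only cosmetic difference is that for $i>0$ you read off $\Omega^{-i}(X^\ast)$ directly from the definition as the image of $\delta_i^\ast$, whereas the paper instead observes $X^\ast \overset{\rm st}{\cong} \Omega^i\bigl((\Omega^iX)^\ast\bigr)$ and then applies $\Omega^{-i}$ using $\Omega^i\Omega^j \overset{\rm st}{\cong} \Omega^{i+j}$.
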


\begin{proof}
Let $i\ge0$.
Taking a minimal free resolution $(\cdots \to F_1 \to F_0 \to 0)$ of $X$, we have an exact sequence $0 \to \Omega^iX \to F_{i-1} \to \cdots \to F_0 \to X \to 0$.
Since $X$ is totally reflexive, the $R$-dual sequence
$$
0 \to X^\ast \to F_0^\ast \to \cdots \to F_{i-1}^\ast \to (\Omega^iX)^{\ast} \to 0
$$
is also exact.
Hence $X^\ast\overset{\rm st}{\cong}\Omega^i((\Omega^iX)^\ast)$.
Applying $\Omega^{-i}$ to this, we get $\Omega^{-i}(X^\ast)\overset{\rm st}{\cong}(\Omega^iX)^\ast$.

Now let $i\le0$.
Set $j=-i$ and $Y=X^\ast$.
Then, since $j\ge0$ and $Y$ is totally reflexive, we have $\Omega^{-j}(Y^\ast)\overset{\rm st}{\cong}(\Omega^jY)^\ast$.
Hence $\Omega^iX\overset{\rm st}{\cong}(\Omega^{-i}(X^\ast))^\ast$.
Applying $(-)^\ast$, we get $(\Omega^iX)^\ast\overset{\rm st}{\cong}\Omega^{-i}(X^\ast)$.
\end{proof}

We are now ready to prove the main result of this section:

\begin{thm}\label{d}
Let $R$ be a $d$-dimensional Cohen-Macaulay local ring with a canonical module $\omega$. 
Let $X$ be a totally reflexive $R$-module.
Let $M$ be a maximal Cohen-Macaulay $R$-module that is locally free on $U_{R}$.
Then, for all $i\in \ZZ$,
$$
\cext_R^i(X,M)\cong\cext_R^{(d-1)-i}(X^\ast,M^\dag)^\vee.
$$
\end{thm}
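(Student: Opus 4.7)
The plan is to reduce the stated isomorphism to a Tate-cohomological local duality, and then establish that duality via a double-complex spectral sequence argument.

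First, I would rewrite the left-hand side as a Tor. By 2.8, 2.7(1), 2.5(2) and Lemma~\ref{b}, one has the chain of stable isomorphisms $\Tr(\Omega^i X) \overset{\rm st}{\cong} \Omega^{-2}((\Omega^i X)^\ast) \overset{\rm st}{\cong} \Omega^{-2-i}(X^\ast)$, hence
$$
\cext_R^i(X,M) \;=\; \lhom_R(\Omega^i X, M) \;\cong\; \Tor_1^R\bigl(\Tr(\Omega^i X), M\bigr) \;\cong\; \Tor_1^R(A,M),
$$
where $A := \Omega^{-2-i}(X^\ast)$ is totally reflexive by 2.6(3). Similarly, 2.8(2) gives $\cext_R^{(d-1)-i}(X^\ast, M^\dag) = \cext_R^{d+1}(A, M^\dag)$, so the theorem reduces to the following Tate local duality: for any totally reflexive $R$-module $A$ and any maximal Cohen-Macaulay $R$-module $M$ locally free on $U_R$,
$$
\Tor_1^R(A,M) \;\cong\; \cext_R^{d+1}(A, M^\dag)^\vee. \qquad (\dagger)
$$

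To prove $(\dagger)$, I would take a complete projective resolution $P_\bullet$ of $A$ and a minimal injective resolution $I^\bullet$ of $\omega$ (of length $d$), and analyze the double complex $\Hom_R(P_\bullet \otimes_R M, I^\bullet)$. The Hom--tensor adjunction together with $\Ext_R^{>0}(M,\omega)=0$ (which holds since $M$ is MCM) collapses one spectral sequence, yielding total cohomology $\cext_R^n(A, M^\dag) = H^n\Hom_R(P_\bullet, M^\dag)$. For the other spectral sequence, $M$ being locally free on $U_R$ and $P_\bullet$ being exact force $P_\bullet \otimes_R M$ to be exact on $U_R$; thus each $H_j(P_\bullet \otimes_R M)$ has finite length, and local duality ($\Ext_R^q(L,\omega)=0$ for $q\ne d$ and $\Ext_R^d(L,\omega)\cong L^\vee$ for finite length $L$) collapses this spectral sequence to $H_{n-d}(P_\bullet \otimes_R M)^\vee$. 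Setting $n=d+1$ (so that $H_1(P_\bullet \otimes_R M) = \Tor_1^R(A,M)$ by the standard agreement of Tate Tor with ordinary Tor in positive degrees) yields $(\dagger)$.

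The main obstacle is $(\dagger)$, in particular the finite-length hypothesis for $H_j(P_\bullet \otimes_R M)$ and the simultaneous degeneration of both spectral sequences at $E_2$. Once $(\dagger)$ is in hand, the theorem follows at once, combined with the shift identity $\cext_R^{d+1}(\Omega^{-2-i}(X^\ast), M^\dag) = \cext_R^{(d-1)-i}(X^\ast, M^\dag)$ from 2.8(2).
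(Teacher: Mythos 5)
Your proof is correct, and it takes a genuinely different route from the paper's. Both arguments share the preliminary reductions: you use $\Omega^2\Tr \overset{\rm st}{\cong} (-)^\ast$, Lemma~\ref{b}, and the shift law 2.8(2) to rewrite $\cext_R^i(X,M)$ as $\Tor_1^R(A,M)$ and $\cext_R^{(d-1)-i}(X^\ast,M^\dag)$ as $\cext_R^{d+1}(A,M^\dag)$ for $A=\Omega^{-2-i}(X^\ast)$, which is the same shift bookkeeping that closes out the paper's chain of isomorphisms. Where you diverge is the core duality step. The paper goes from $\lhom_R(\Omega^iX,M)=\mathrm{H}^0_{\mathfrak m}(\lhom_R(\Omega^iX,M))$ via Grothendieck local duality to $\Ext^d_R(\lhom_R(\Omega^iX,M),\omega)^\vee$, and then quotes Yoshino's Auslander--Reiten-type identity \cite[(3.10)]{Y} $\Ext_R^d(\lhom_R(N,M),\omega)\cong\Ext_R^1(M,(\Omega^d\Tr N)^\dag)$ followed by the symmetry $\Ext_R^i(M,N^\dag)\cong\Ext_R^i(N,M^\dag)$ of Lemma~\ref{a}(1) to land in the right Ext group. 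You instead re-derive the needed duality $(\dagger)$ from scratch by running both spectral sequences of the double complex $\Hom_R(P_\bullet\otimes_R M,I^\bullet)$: the MCM hypothesis kills the column spectral sequence in positive $q$ to give $\cext_R^n(A,M^\dag)$, while local freeness on $U_R$ forces the homology of $P_\bullet\otimes M$ to have finite length, so Grothendieck duality for finite-length modules kills the row spectral sequence in degrees $q\neq d$ and gives $H_{n-d}(P_\bullet\otimes M)^\vee$, whence $\cext_R^{d+1}(A,M^\dag)\cong\Tor_1^R(A,M)^\vee$ at $n=d+1$. (You tacitly use that $\Tor_1^R(A,M)$ has finite length so Matlis double-dualization returns it; you should make that explicit.) The upshot is that your argument is more self-contained --- it essentially re-proves, in the form $(\dagger)$, the content of Yoshino's formula combined with Lemma~\ref{a}(1) --- at the cost of having to verify convergence and degeneration of the two spectral sequences, which is harmless here because the $q$-direction is bounded by $\textup{injdim}\,\omega=d$, so the total complex in each degree is a finite sum and both filtrations are finite.
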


\begin{proof}
%It is easy to see that the completion $\widehat M=M\otimes_{R}\widehat{R}$ of $M$ is also locally free on $U_{\widehat R}$, where $\widehat R$ is the $\mathfrak{m}$-adic completion of $R$.
%Note that the $R$-modules $\cext_R^i(X,M)$ and $\cext_R^{(d-1)-i}(X^\ast,M^\dag)$ are of finite length and hence are complete. Furthermore, by \cite[3.2.14(c)]{BH}, we may assume that $R$ is complete.
We have the following isomorphisms:
\begin{align*} 
\cext_R^i(X,M) 
& = \lhom_R(\Omega^iX,M)
= \mathrm{H}_{\mathfrak m}^0(\lhom_R(\Omega^iX,M))\\
& \cong \Ext_R^d(\lhom_R(\Omega^iX,M),\omega)^\vee
\cong \Ext_R^1(M,(\Omega^d\Tr\Omega^iX)^\dag)^\vee \\
& \cong \Ext_R^1(\Omega^d\Tr\Omega^iX,M^\dag)^\vee
\cong \cext_R^1(\Omega^d\Tr\Omega^iX,M^\dag)^\vee \\
& \cong \cext_R^{d-1}(\Omega^2\Tr\Omega^iX,M^\dag)^\vee
\cong \cext_R^{d-1}((\Omega^iX)^\ast,M^\dag)^\vee\\
& \cong \cext_R^{(d-1)-i}(\Omega^i((\Omega^iX)^\ast),M^\dag)^\vee
\cong \cext_R^{(d-1)-i}(X^\ast,M^\dag)^\vee.
\end{align*}
Here, since $M$ is locally free on $U_R$, the $R$-module $\underline{\Hom}_R(\Omega^iX,M)$ has finite length, which shows the second equality.
The first and second isomorphisms are obtained by \cite[(3.5.9)]{BH} and \cite[(3.10)]{Y}, respectively.
Lemmas \ref{a}(1) and \ref{b} give the third and last isomorphisms, respectively.
The other isomorphisms follow from \ref{2.5}(2) and \ref{2.8}.
\end{proof}

The following result is the original Auslander-Reiten duality theorem, which now follows from Theorem \ref{d}.

\begin{cor}[Auslander-Reiten duality]\label{ARD}
Let $R$ be a $d$-dimensional Gorenstein local ring and let $X$ and $M$ be maximal Cohen-Macaulay $R$-modules such that $M$ is locally free on $U_R$.
Then $\cext_R^i(X,M)\cong\cext_R^{(d-1)-i}(M,X)^\vee$ for all $i\in\ZZ$.
\end{cor}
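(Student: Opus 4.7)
The plan is to deduce Corollary \ref{ARD} directly from Theorem \ref{d} by exploiting the Gorenstein hypothesis in two ways: first, every maximal Cohen-Macaulay module over a Gorenstein local ring is totally reflexive (see \ref{2.5}), so both $X$ and $M$ satisfy the hypotheses of Theorem \ref{d} with the roles symmetric; second, $\omega\cong R$, so $M^\dag=M^\ast$ and the conclusion of Theorem \ref{d} becomes
$$
\cext_R^i(X,M)\cong\cext_R^{(d-1)-i}(X^\ast,M^\ast)^\vee.
$$
Thus the whole task reduces to producing, for every integer $j$, a natural isomorphism
$$
\cext_R^j(X^\ast,M^\ast)\cong\cext_R^j(M,X),
$$
after which I set $j=(d-1)-i$ and conclude.

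For positive $j$, the identification $\cext_R^j=\Ext_R^j$ from \ref{2.8}(1) together with Lemma \ref{a}(2) gives this isomorphism immediately, since $X$ and $M$ are totally reflexive. To extend it to arbitrary $j\in\ZZ$, I would shift: for $n\gg 0$ we have $j+n>0$, and by Lemma \ref{b} combined with \ref{2.8}(2),
$$
\cext_R^j(X^\ast,M^\ast)\cong\cext_R^{j+n}\bigl(\Omega^{-n}(X^\ast),M^\ast\bigr)\cong\cext_R^{j+n}\bigl((\Omega^nX)^\ast,M^\ast\bigr).
$$
Now $\Omega^nX$ is totally reflexive by \ref{2.5}, so Lemma \ref{a}(2) applied in positive degree yields
$$
\cext_R^{j+n}\bigl((\Omega^nX)^\ast,M^\ast\bigr)=\Ext_R^{j+n}\bigl((\Omega^nX)^\ast,M^\ast\bigr)\cong\Ext_R^{j+n}(M,\Omega^nX)=\cext_R^{j+n}(M,\Omega^nX).
$$
Finally, \ref{2.8}(3) (with $M$ totally reflexive) converts the cosyzygy back into a degree shift:
$$
\cext_R^{j+n}(M,\Omega^nX)\cong\cext_R^{(j+n)-n}(M,X)=\cext_R^j(M,X).
$$

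The main obstacle is the last ingredient, namely upgrading the Hom-tensor swap in Lemma \ref{a}(2)—which is only stated for ordinary Ext in nonnegative degrees—to a statement about Tate cohomology in all degrees. The shifting argument above is the natural way around this, and it works because $X^\ast$, $(\Omega^nX)^\ast$, and $\Omega^nX$ are all totally reflexive over the Gorenstein ring $R$, so Tate cohomology behaves functorially and the syzygy/cosyzygy shifts in \ref{2.8} match up perfectly with the dualities in Lemmas \ref{a} and \ref{b}.
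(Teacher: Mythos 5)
Your proposal is correct and follows essentially the same route as the paper: both reduce the corollary to the identity $\cext_R^j(X^\ast,Y^\ast)\cong\cext_R^j(Y,X)$ for totally reflexive modules, proven by shifting to a positive degree where Tate cohomology agrees with ordinary Ext, then applying Lemma \ref{b} and Lemma \ref{a}(2) and shifting back via \ref{2.8}(3). The only cosmetic difference is that the paper shifts every $\cext_R^i$ down to degree $1$ using $\Omega^{i-1}$, whereas you shift up by a single large $n$; the ingredients and logic are identical.
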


\begin{proof}
In general, let $R$ be a local ring and $X,Y$ totally reflexive $R$-modules.
For an integer $i$ there are isomorphisms
\begin{align*}
\cext_R^i(X^\ast,Y^\ast)
& \cong \Ext_R^1(\Omega^{i-1}(X^\ast),Y^\ast)
\cong \Ext_R^1((\Omega^{1-i}X)^\ast,Y^\ast) \\
& \cong \Ext_R^1(Y,\Omega^{1-i}X)
\cong \cext_R^i(Y,X),
\end{align*}
where the second and third isomorphisms follow from Lemmas \ref{b} and \ref{a}(2), respectively.
Now the corollary follows from Theorem \ref{d}.
\end{proof}

\section{On a conjecture of Auslander and Reiten}

In this section we are concerned with the Auslander-Reiten conjecture (cf. the introduction,  \cite{ADS} or \cite{AR}) for commutative local rings.
Our purpose is to prove that an affirmative answer to a question of Christensen and Holm \cite{CH1} will show that all local rings that have positive depth satisfy the Auslander-Reiten conjecture, cf. Corollary \ref{corARC}.

Before proving the main result of this section stated as Theorem \ref{thmARC}, we will first discuss an application of Theorem \ref{Araya}; a weaker version of the Auslander-Reiten conjecture holds for certain periodic modules over even dimensional Gorenstein isolated singularities.

\begin{prop}\label{APP}
Let $R$ be a $d$-dimensional Gorenstein local ring, and let $M$ be a maximal Cohen-Macaulay $R$-module that is locally free on $U_R$.
Assume that $d$ is a positive even integer and that $M \overset{\rm st}{\cong} \Omega^{2n}M$ for some positive integer $n$.
If $\Ext^{i}_{R}(M,M)=0$ for all $i=1,3,5,\ldots, 2n-1$, then $M$ is free.
\end{prop}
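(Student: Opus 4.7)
The plan is to reduce the hypothesis to a single vanishing statement $\cext_R^{d-1}(M,M)=0$ and then invoke Theorem~\ref{Araya}. Since $R$ is Gorenstein and $M$ is maximal Cohen-Macaulay, $M$ is totally reflexive by \ref{2.5}, so the Tate cohomology $\cext_R^\bullet(M,M)$ is defined for all integers, and it agrees with ordinary $\Ext$ in positive degrees by \ref{2.8}(1).

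First I would exploit the stable periodicity $M\overset{\rm st}{\cong}\Omega^{2n}M$ to get periodicity of Tate cohomology. Since Tate cohomology vanishes on free modules and is additive, the stable isomorphism gives $\cext_R^i(M,\Omega^{2n}M)\cong\cext_R^i(M,M)$ for all $i\in\ZZ$, and then \ref{2.8}(3) (applicable because $M$ is totally reflexive) yields
\[
\cext_R^{i-2n}(M,M)\cong\cext_R^i(M,\Omega^{2n}M)\cong\cext_R^i(M,M)
\]
for every integer $i$. Thus $\cext_R^\bullet(M,M)$ is $2n$-periodic.

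Next I would translate the hypothesis $\Ext_R^i(M,M)=0$ for $i=1,3,\ldots,2n-1$ into $\cext_R^i(M,M)=0$ for the same odd values of $i$, using \ref{2.8}(1). Because these exhaust a full period's worth of odd indices and Tate cohomology has period $2n$, one obtains $\cext_R^i(M,M)=0$ for every odd integer $i$. In particular, since $d$ is a positive even integer, $d-1$ is a positive odd integer, so $\cext_R^{d-1}(M,M)=0$.

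Finally, I would apply Theorem~\ref{Araya} directly: $M$ is a maximal Cohen-Macaulay $R$-module, locally free on $U_R$, and now satisfies $\cext_R^{d-1}(M,M)=0$, so $M$ is free. There is no real obstacle here; the only subtlety is being careful that the dimension-shifting formulas of \ref{2.8} apply (which requires $M$ to be totally reflexive, guaranteed by the Gorenstein hypothesis) and that the stable isomorphism passes correctly through Tate cohomology.
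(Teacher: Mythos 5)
Your proof is correct, and it takes a genuinely more direct route than the paper's. The paper also begins by observing that the hypotheses force $\Ext^i_R(M,M)=0$ for all odd positive integers $i$, but then introduces an auxiliary module $N=\bigoplus_{0\le i\le n-1}\Omega^{2i}M$, which satisfies $N\overset{\rm st}{\cong}\Omega^2N$, computes $\Ext^1_R(N,N)=0$ from the vanishing for $M$, identifies $\cext^{d-1}_R(N,N)\cong\Ext^1_R(N,N)$ because $d$ is even and $N$ is $2$-periodic, and applies Theorem~\ref{Araya} to $N$, concluding that $M$ is free as a direct summand of $N$. You skip the auxiliary construction: using the $2n$-periodicity of $\cext^\bullet_R(M,M)$ coming from $M\overset{\rm st}{\cong}\Omega^{2n}M$ together with \ref{2.8}, you see $\cext^i_R(M,M)=0$ for every odd $i$, then specialize to $i=d-1$ (odd and positive) and apply Theorem~\ref{Araya} directly to $M$. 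Both routes rest on the same two pillars (periodicity and Theorem~\ref{Araya}); yours is shorter, and it makes clear that the intermediate module $N$ in the paper's argument is not actually needed, since the paper's own opening observation already yields $\cext^{d-1}_R(M,M)\cong\Ext^{d-1}_R(M,M)=0$. One small remark: your periodicity argument invokes Tate cohomology in all integer degrees, but since you only ever need $\cext^{d-1}$ with $d-1\ge1$, ordinary dimension shifting of $\Ext$ in positive degrees would suffice; this is a cosmetic simplification, not a gap.
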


\begin{proof}
First, note by assumption that $\Ext_R^i(M,M)=0$ for all odd integers $i$.
We set $N=\bigoplus_{0\le i\le n-1}\Omega^{2i}M$.
Then $N\overset{\rm st}{\cong} \Omega^{2}N$, and we have
\begin{align*}
\Ext^{1}_{R}(N,N)
& \cong \textstyle\bigoplus_{0\leq i, j \leq n-1}\Ext^{1}_{R}(\Omega^{2i}M, \Omega^{2j}M)
\cong \textstyle\bigoplus_{0\leq i, j \leq n-1}\Ext^{1}_{R}(\Omega^{2i+2n}M, \Omega^{2j}M)\\
& \cong \textstyle\bigoplus_{0\leq i, j \leq n-1}\cext^{1+2i+2n-2j}_{R}(M,M)
\cong \textstyle\bigoplus_{0\leq i, j \leq n-1}\Ext^{2(i+n-j+1)-1}_{R}(M,M)=0.
\end{align*}
Here, the fourth isomorphism follows from the fact that $1+2i+2n-2j=2(i+n-j+1)-1>0$.
Since $d$ is positive and even, we have $\cext_R^{d-1}(N,N)\cong\Ext_R^{d-1}(N,N)\cong\Ext_R^1(\Omega^{d-2}N,N)\cong\Ext_R^1(N,N)=0$.
Theorem \ref{Araya} implies that $N$, and hence $M$, is free.
\end{proof}

It follows by definition that a module $M$ with $M\overset{\rm st}{\cong}  \Omega^{n}M$ for some $n>0$ has bounded Betti sequence.
Therefore Theorem \ref{Araya} and Proposition \ref{APP} raise the following question:

\begin{ques}\label{qI}
Let $R$ be a $d$-dimensional Gorenstein local ring (that is not a complete intersection) with $d\geq 3$.
Let $M$ be a maximal Cohen-Macaulay $R$-module that is locally free on $U_R$.
Assume that $M$ has bounded Betti sequence.
If $\Ext^{i}_{R}(M,M)=0$ for all $i=1,2, \ldots, d-2$, then is $M$ free?
\end{ques}

There is a negative answer to Question \ref{qI} in case the ring considered is a complete intersection:

\begin{eg}
Let $R=k[[X,Y,Z,U]]/(XZ-YU)$ and $I=(X,Y)R$.
Then $R$ is a three dimensional hypersurface with an isolated singularity, and $I$ is a maximal Cohen-Macaulay $R$-module.
One can easily see that $\Ext^1_{R}(I,I)=0$ but $I$ is not free.
\end{eg}

The Auslander-Reiten conjecture (for commutative local rings) asserts that all commutative local rings satisfy the \emph{Auslander-Reiten condition} $\arc$, which is defined as follows:
\begin{quote}
\begin{enumerate}
\item[$\arc$]
For every $R$-module $M$, if $\Ext^{>0}_{R}(M,M\oplus R)=0$, then $M$ is free.
\end{enumerate}
\end{quote}
Some of the well-known examples of local rings that satisfy $\arc$ are complete intersection rings \cite{ADS}, Golod rings \cite{JoS} and Gorenstein rings of codimension at most four \cite{S}.
As discussed in the previous section, a recent result of Araya \cite{A} shows that if all Gorenstein local rings of dimension at most one satisfy the Auslander-Reiten conjecture, so do all Gorenstein local rings.
As complete intersections satisfy $\arc$, Araya's result in particular reproves a result of Huneke and Leuschke \cite{HL}: Gorenstein rings that are complete intersections in codimension one satisfy $\arc$.
Indeed, over local domains, this result is motivated by a question of Huneke and Wiegand \cite{HW} which we will discuss briefly at the end of this article (cf. also \cite{Ce}).
We refer the interested reader to \cite{CH1} for a survey of the Auslander-Reiten and related conjectures.

A local ring $R$ is said to satisfy the \emph{Auslander condition} $\ac$ on the vanishing of cohomology if the following condition holds:
\begin{quote}
\begin{enumerate}
\item[$\ac$]
For every $R$-module $M$ there exists a nonnegative integer $b_{M}$ such that for every $R$-module $N$ one has: if $\Ext^{\gg 0}_{R}(M,N)=0$, then $\Ext^{>b_{M}}_{R}(M,N)=0$.
\end{enumerate}
\end{quote}
Christensen and Holm proved in \cite{CH1} that $\ac$ implies $\arc$.
More precisely, the following result follows from \cite[(2.3)]{CH1}:
\begin{thm}[Christensen-Holm]\label{thmCH}
Let $R$ be a local ring that satisfies $\ac$, and let $M$ be an $R$-module.
If $\Ext^{>0}_{R}(M,R)=0$ and $\Ext^{\gg 0}_{R}(M,M)=0$, then $M$ is free.
\end{thm}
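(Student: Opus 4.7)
The plan is to split the proof into two stages. In the first, we use $\ac$ together with a syzygy dimension-shift to upgrade the eventual vanishing $\Ext^{\gg 0}_R(M,M)=0$ to the sharp vanishing $\Ext^{>0}_R(M,M)=0$; combined with $\Ext^{>0}_R(M,R)=0$ this yields $\Ext^{>0}_R(M,M\oplus R)=0$. In the second stage, we use this strong vanishing together with $\ac$ to conclude that $M$ is free; this is essentially the implication $\ac\Rightarrow\arc$.

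For the first stage, let $b=b_M$ be the bound furnished by $\ac$. The hypothesis $\Ext^{>0}_R(M,R)=0$ allows a clean syzygy dimension-shift in the second argument: applying $\Hom_R(M,-)$ to each short exact sequence $0\to\Omega^{i+1}M\to F_i\to\Omega^i M\to 0$ (with $F_i$ free) gives $\Ext^{\ge 1}_R(M,F_i)=0$, so the long exact sequence produces $\Ext^{j+1}_R(M,\Omega M)\cong\Ext^j_R(M,M)$ for $j\ge 1$. Iterating, $\Ext^{j+b}_R(M,\Omega^b M)\cong\Ext^j_R(M,M)$ for $j\ge 1$. Since $\Ext^{\gg 0}_R(M,M)=0$, the module $N=\Omega^b M$ satisfies $\Ext^{\gg 0}_R(M,N)=0$, and $\ac$ yields $\Ext^{>b}_R(M,\Omega^b M)=0$. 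Transported across the isomorphism, this gives $\Ext^{>0}_R(M,M)=0$.

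For the second stage, with $\Ext^{>0}_R(M,M\oplus R)=0$ in hand, my approach would be to first establish $\pd_R M<\infty$. Once this is known, $M$ is free by a standard contradiction: if $n=\pd_R M\ge 1$, then $\Omega^n M\cong F_n$ is free of some rank $r$, and $\Ext^1_R(\Omega^{n-1}M,\Omega^n M)\cong\Ext^1_R(\Omega^{n-1}M,R)^r\cong\Ext^n_R(M,R)^r=0$ splits the sequence $0\to\Omega^n M\to F_{n-1}\to\Omega^{n-1}M\to 0$, making $\Omega^{n-1}M$ a summand of $F_{n-1}$ and hence free, contradicting minimality of the resolution. To obtain $\pd_R M<\infty$, the natural plan is to apply $\ac$ a second time to an auxiliary module built from $M$ (for instance $\Omega^i M$, $\Tr M$, $M^\ast$, or a direct sum of these), exploiting that the first-stage argument actually shows $\Ext^{>0}_R(M,N)=0$ for every $N$ of finite projective dimension. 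The principal obstacle is precisely this reduction: the hypotheses offer no transparent test module $N$ for which $\Ext^{\gg 0}_R(M,N)=0$ is visible (the candidate $N=k$ is circular), so a delicate further use of $\ac$ is required. This is the heart of the Christensen--Holm argument \cite[(2.3)]{CH1}.
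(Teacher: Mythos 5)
Your Stage~1 is correct and, on its own, it establishes the implication $\ac\Rightarrow\sac$: the dimension shift $\Ext^j_R(M,M)\cong\Ext^{j+b}_R(M,\Omega^b M)$ for $j\ge 1$, available because $\Ext^{>0}_R(M,R)=0$ kills $\Ext^{>0}_R(M,F)$ for every free $F$, converts the eventual vanishing of $\Ext_R(M,M)$ into eventual vanishing of $\Ext_R(M,\Omega^b M)$; then $\ac$ with bound $b=b_M$ applies, and transporting back gives $\Ext^{>0}_R(M,M)=0$. But Stage~2 is where the mathematical content of the theorem lies, and you have not supplied it. Having $\Ext^{>0}_R(M,M\oplus R)=0$ is exactly the hypothesis of the Auslander--Reiten condition $\arc$; concluding that $M$ is free is the implication $\ac\Rightarrow\arc$, which is the substantive claim of the statement. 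You correctly sketch the endgame (show $\pd_R M<\infty$, then use that $\Ext^{\pd_R M}_R(M,R)\ne 0$ to force $\pd_R M=0$) and you correctly diagnose the obstacle (there is no transparent test module $N$ with visible $\Ext^{\gg 0}_R(M,N)=0$ on which to trigger $\ac$ a second time), but diagnosing the obstacle does not overcome it. As written the argument does not reach the conclusion; the decisive construction is deferred entirely to \cite[(2.3)]{CH1}, which is circular as a proof of this very statement.

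It is worth noting that the paper itself offers no proof here: the theorem is introduced with the citation ``follows from \cite[(2.3)]{CH1},'' so there is no internal argument to have reconstructed. The honest form of your proposal is therefore Stage~1 plus a black-box appeal to Christensen--Holm for $\ac\Rightarrow\arc$. If you want a self-contained proof you must reproduce the argument of \cite[(2.3)]{CH1}, whose essential device is a further, more carefully engineered application of $\ac$ to an auxiliary module built from $M$; the simple second-variable dimension shift of Stage~1, while useful, does not by itself produce such a module.
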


As the vanishing conditions imposed on the module $M$ considered in Theorem \ref{thmCH} appear to be weaker than those in the Auslander-Reiten Conjecture, Christensen and Holm asked in \cite[2.4]{CH1} whether the following two conditions are equivalent.
\begin{enumerate}
\item $\Ext^{>0}_{R}(M,R)=0$ and $\Ext^{>0}_{R}(M,M)=0$.
\item $\Ext^{>0}_{R}(M,R)=0$ and $\Ext^{\gg 0}_{R}(M,M)=0$.
\end{enumerate}
An example of Schulz \cite{Sch} yields a self-injective noncommutative Artin algebra over which the conditions (1) and (2) are \emph{not} equivalent.
As stated in \cite{CH1}, for commutative rings, it is not known whether $(2)$ implies $(1)$, in fact not even for Gorenstein rings.
Notice that Theorem \ref{thmCH} shows that if the ring satisfies $\ac$, then $(1)$ and $(2)$ are equivalent (cf. also \cite{Di}).
This motivates us to consider the following two conditions.
\begin{quote}
\begin{enumerate}
\item[$\sac$]
For every $R$-module $M$, if $\Ext^{>0}_{R}(M,R)=0$ and $\Ext^{\gg 0}_{R}(M,M)=0$, then $\Ext^{>0}_{R}(M,M)=0$.
\item[$\sacc$]
For every $R$-module $M$ that has constant rank, if $\Ext^{>0}_{R}(M,R)=0$ and $\Ext^{\gg 0}_{R}(M,M)=0$, then $\Ext^{>0}_{R}(M,M)=0$.
\end{enumerate}
\end{quote}
Here $\sac$ and $\sacc$ stand for the \emph{symmetric Auslander condition} and the \emph{symmetric Auslander condition for modules with constant rank}.
Recall that an $R$-module $M$ is said to be of \emph{constant rank} if there exists an integer $r$ such that $M_{p}\cong R^{(r)}_{p}$ for all associated prime ideals $p$ of $R$.

Now we investigate the conditions $\arc,\sac,\sacc$ under modding out by a nonzerodivisor.

\begin{thm}\label{thmARC}
Let $R$ be a local ring and let $x\in R$ be a nonzerodivisor on $R$.
\begin{enumerate}[\rm(1)]
\item
If $R/xR$ satisfies $\sac$/$\arc$, then $R$ satisfies $\sac$/$\arc$.
\item
If $R$ satisfies $\sacc$, then $R/xR$ satisfies $\arc$.
\end{enumerate} 
\end{thm}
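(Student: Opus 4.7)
The plan for both parts is the standard mod-$x$ reduction, using Rees's change-of-rings isomorphism $\Ext^i_R(L,N)\cong\Ext^i_{R/xR}(L/xL,N)$ (valid when $x$ is $L$-regular and $N$ is an $R/xR$-module) together with the long exact sequence associated to the Koszul sequence $0\to R\xrightarrow{x}R\to R/xR\to 0$.

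For part (1), take an $R$-module $M$ satisfying the $\sac$- or $\arc$-hypothesis. The first (and most delicate) step is to show $x$ is $M$-regular: since $\Ext^{>0}_R(M,R)=0$, a standard Auslander-Bridger argument forces $M$ to be torsionless (it embeds in a free $R$-module), so any $R$-regular element is $M$-regular. Set $\bar M=M/xM$. Applying $\Hom_R(M,-)$ and $\Hom_R(\bar M,-)$ to the Koszul sequence, combined with Rees's isomorphism, translates the hypotheses on $M$ into the corresponding hypotheses on $\bar M$ over $R/xR$. Invoking $\sac$ or $\arc$ for $R/xR$ yields either $\Ext^{>0}_{R/xR}(\bar M,\bar M)=0$ or freeness of $\bar M$. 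In the $\arc$ case, freeness of $\bar M$ over $R/xR$ lifts to freeness of $M$ over $R$ by the standard criterion ($x$ is $M$-regular and $M/xM$ free implies $M$ free). In the $\sac$ case, the LES for $0\to M\xrightarrow{x}M\to\bar M\to 0$ applied to $\Hom_R(M,-)$, combined with $\Ext^{>0}_R(M,\bar M)\cong\Ext^{>0}_{R/xR}(\bar M,\bar M)=0$, makes multiplication by $x$ surjective on $\Ext^{>0}_R(M,M)$; Nakayama's lemma finishes.

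For part (2), it suffices to show $\pd_{R/xR}\bar N<\infty$, because combined with $\Ext^{>0}_{R/xR}(\bar N,R/xR)=0$ this forces $\pd_{R/xR}\bar N=0$ (otherwise the top Ext with $R/xR$ is nonzero in a minimal resolution). Applying $\Hom_R(\bar N,-)$ to the Koszul sequence: since $x\bar N=0$, multiplication by $x$ acts as zero on every $\Ext^i_R(\bar N,-)$, and the resulting LES combined with the two-row change-of-rings spectral sequence ($\Tor_0^R(\bar N,R/xR)=\Tor_1^R(\bar N,R/xR)=\bar N$) shows that $\Ext^{>0}_{R/xR}(\bar N,R/xR)=0$ yields $\Ext^{\geq 3}_R(\bar N,R)=0$, and analogously $\Ext^{>0}_{R/xR}(\bar N,\bar N)=0$ yields $\Ext^{\geq 2}_R(\bar N,\bar N)=0$. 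Set $M=\Omega^n_R\bar N$ for some $n\geq 2$: $M$ is torsionless (hence $x$-regular); $M$ has constant rank (at any $\mathfrak p\in\Ass R$ one has $x\notin\mathfrak p$, so $\bar N_{\mathfrak p}=0$, the localized minimal $R$-resolution is split exact, and $M_{\mathfrak p}$ is free of a fixed rank); $\Ext^{>0}_R(M,R)=0$ by the syzygy Ext-shift; and $\Ext^{\gg 0}_R(M,M)=0$ by iterated shifts. Applying $\sacc$ gives $\Ext^{>0}_R(M,M)=0$, and Rees transfers this to $\Ext^{>0}_{R/xR}(M/xM,M/xM)=0$. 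The Koszul-lifted-resolution construction (lifting a minimal $R/xR$-resolution $\bar F_\bullet\to\bar N$ to $R$ via a homotopy $\tilde d^2=x\sigma$) yields a stable $R/xR$-isomorphism $M/xM\overset{\rm st}{\cong}\Omega^n_{R/xR}\bar N\oplus\Omega^{n-1}_{R/xR}\bar N$, so the cross-summand $\Ext^1_{R/xR}(\Omega^{n-1}_{R/xR}\bar N,\Omega^n_{R/xR}\bar N)=0$ forces the defining SES $0\to\Omega^n_{R/xR}\bar N\to\bar F_{n-1}\to\Omega^{n-1}_{R/xR}\bar N\to 0$ to split. Hence $\Omega^{n-1}_{R/xR}\bar N$ is a direct summand of the free $\bar F_{n-1}$, whence $R/xR$-free, so $\pd_{R/xR}\bar N\leq n-1<\infty$. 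The main technical hurdle in part (2) is establishing the stable decomposition of $M/xM$ via the Koszul-lifted bookkeeping.
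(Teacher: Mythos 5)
Your overall strategy (reduce modulo $x$, use Rees's change-of-rings, apply the condition for $R/xR$ or $R$, transfer back) is exactly the paper's, but there is a genuine gap at the very first step of part~(1), and part~(2) is more convoluted than necessary and misattributes a key splitting.

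\textbf{Part (1).} You assert that $\Ext^{>0}_R(M,R)=0$ forces $M$ to be torsionless ``by a standard Auslander--Bridger argument.'' This is not correct. The Auslander--Bridger exact sequence reads
$0\to\Ext^1_R(\Tr M,R)\to M\to M^{\ast\ast}\to\Ext^2_R(\Tr M,R)\to 0$,
so torsionless is governed by $\Ext^1_R(\Tr M,R)$, not by $\Ext^1_R(M,R)$. (What $\Ext^{>0}_R(M,R)=0$ does give, via $\Tr\Tr M\overset{\rm st}{\cong}M$, is that $\Tr M$ is reflexive --- a statement about $\Tr M$, not $M$.) Without this you have not established that $x$ is $M$-regular, and the whole reduction to $R/xR$ collapses. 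The paper's fix is cleaner: replace $M$ by $\Omega_R M$, which is a submodule of a free module and hence automatically torsionless, after checking that $\Ext^{>0}_R(\Omega_R M,R)=0$ and $\Ext^i_R(\Omega_R M,\Omega_R M)\cong\Ext^i_R(M,M)$ for $i>0$ (and noting, for the $\arc$ case, that $\Ext^1_R(M,M)=0$ implies $\Omega_R M$ free iff $M$ free). You should insert this syzygy step.

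\textbf{Part (2).} Your strategy works but is roundabout. Two remarks. First, a small slip: the change-of-rings spectral sequence $\Ext^p_S(\bar N,\Ext^q_R(S,R))\Rightarrow\Ext^{p+q}_R(\bar N,R)$ collapses to $\Ext^p_S(\bar N,S)\cong\Ext^{p+1}_R(\bar N,R)$, giving $\Ext^{\geq 2}_R(\bar N,R)=0$ (not $\geq 3$); in particular $n=1$ already suffices, which is what the paper uses. Second, the stable decomposition $\Omega^n_R\bar N/x\Omega^n_R\bar N\overset{\rm st}{\cong}\Omega^n_S\bar N\oplus\Omega^{n-1}_S\bar N$ is \emph{not} a formal consequence of lifting a resolution via a homotopy $\tilde d^2=x\sigma$: the snake-lemma sequence $0\to\bar N\to\Omega_R\bar N/x\Omega_R\bar N\to\Omega_S\bar N\to 0$ splits precisely because $\Ext^2_S(\bar N,\bar N)=0$ (it is an element of $\Ext^1_S(\Omega_S\bar N,\bar N)\cong\Ext^2_S(\bar N,\bar N)$); without that hypothesis the sequence is genuinely non-split --- take $R=k[[t]]$, $x=t^2$, $\bar N=k$, where $\Omega_R k/x\Omega_R k\cong R/t^2R$ is indecomposable, not $k\oplus\Omega_S k\cong k^2$. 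You do have $\Ext^2_S(\bar N,\bar N)=0$ as part of the $\arc$ hypothesis, so your argument can be repaired, but the splitting should be attributed to this vanishing rather than to the Koszul-lift bookkeeping. The paper avoids all of this by taking $n=1$ directly: it splits the snake-lemma sequence, identifies $\Ext^i_S(M,\Omega_SM)\cong\Ext^i_R(\Omega_RM,\Omega_RM)$, feeds $\Omega_R\bar N$ into $\sacc$ (it has constant rank because $\bar N_\p=0$ for $\p\in\Ass R$), and concludes from $\Ext^1_S(M,\Omega_SM)=0$.

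In short: the core idea matches the paper, but you must pass to $\Omega_R M$ in part (1) to secure $x$-regularity, and in part (2) the splitting of the snake-lemma sequence uses $\Ext^2_S(\bar N,\bar N)=0$, not the homotopy lift, and one syzygy suffices.
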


\begin{proof} We set $S=R/xR$.

(1) Let $M$ be an $R$-module with $\Ext_{R}^{>0}(M,R)=0$.
Then we have $\Ext_{R}^{>0}(\Omega_{R} M,R)=0$, and $\Ext_R^i(\Omega_RM,\Omega_RM)\cong\Ext_R^{i+1}(M,\Omega_RM)\cong\Ext_R^i(M,M)$ for $i>0$.
Therefore, to prove that $R$ satisfies $\sac$/$\arc$, we may replace $M$ with $\Omega_{R}M$ and hence assume that $x$ is a nonzerodivisor on both $R$ and $M$.
(Note that, if  $\Ext^{1}_{R}(M,M)=0$, then $\Omega_{R}M$ is free if and only if $M$ is free \cite[Lemma 1(iii), page 154]{Mat}.)

We consider the short exact sequence $0 \to M  \stackrel{x}{\rightarrow} M \to M/xM \to 0$.
Let $n\ge0$.
Nakayama's lemma implies that $\Ext_{R}^{>n}(M,M)=0$ if and only if $\Ext^{>n}_{R}(M,M/xM)=0$.
Since $\Ext^{i}_{R}(M,M/xM)\cong \Ext^{i}_{S}(M/xM,M/xM)$ for $i\ge0$ by \cite[Lemma 2(ii), page 140]{Mat}, we deduce that $\Ext_{R}^{>n}(M,M)=0$ if and only if $\Ext^{>n}_{S}(M/xM,M/xM)=0$.
Similarly, $\Ext^{>0}_{S}(M/xM, S)=0$ since $\Ext^{>0}_R(M,R)=0$. 
Therefore, if $S$ satisfies $\sac$, then so does $R$.
Furthermore, if $\Ext_R^{>0}(M,M)=0$ and $S$ satisfies $\arc$, then $M/xM$ is free over $S$ and hence $M$ is free over $R$.
Thus, $R$ satisfies $\arc$.

(2) Let $M$ be an $S$-module such that $\Ext_{S}^{>0}(M,M\oplus S)=0$.
We would like to prove that $M$ is a free $S$-module.
Using the snake lemma, we see that the multiplication by $x$ on the natural exact sequence $0 \to \Omega_{R}M \to R^{(n)} \to M \to 0$ yields an exact sequence $0 \to M \to \Omega_{R}M/x(\Omega_{R}M) \to S^{(n)} \to M \to 0$.
Hence we have a short exact sequence
$$
0 \to M \to \Omega_{R}M/x(\Omega_{R}M) \to \Omega_{S}M \to 0.
$$
This, being an element of $\Ext^{1}_{S}(\Omega_{S}M,M)\cong\Ext^{2}_{S}(M,M)=0$, splits and hence yields $\Omega_{R}M/x(\Omega_{R}M) \cong M \oplus \Omega_{S}M$.
For all positive integers $i$, we have:
\begin{align*}
\Ext^{i}_{S}(M,\Omega_{S}M)
& \cong \Ext^{i}_{S}(M, M \oplus \Omega_{S}M)
\cong \Ext^{i}_{S}(M, \Omega_{R}M/x(\Omega_{R}M))\\
& \cong  \Ext^{i+1}_{R}(M, \Omega_{R}M)
\cong   \Ext^{i}_{R}(\Omega_{R}M, \Omega_{R}M),
\end{align*}
where the third isomorphism holds by \cite[Lemma 2, page 140]{Mat}.
Since $\Ext^{i}_{S}(M,M) \cong \Ext^{i+1}_{S}(M,\Omega_{S}M)$ for all $i>0$, we have that $\Ext^{>1}_{R}(\Omega_{R}M, \Omega_{R}M)=0$.
Note that $\Omega_{R}M$ is of constant rank and $\Ext^{>0}_{R}(\Omega_{R}M, R)=0$. Therefore, since $R$ satisfies $\sacc$, $\Ext^{>0}_{R}(\Omega_{R}M, \Omega_{R}M)=0$.
Now $\Ext^{1}_{S}(M, \Omega_{S}M)\cong\Ext_R^1(\Omega_{R}M, \Omega_{R}M)=0$ and hence $M$ is free over $S$.
\end{proof}

As a corollary of Theorem \ref{thmARC} we have:

\begin{cor} \label{corARC} Let $R$ be a local ring that has positive depth. If $R$ satisfies $\sacc$, then $R$ satisfies $\arc$.
\end{cor}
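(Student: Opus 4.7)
The proof is essentially a two-step chain that invokes both halves of Theorem \ref{thmARC}. Since $R$ has positive depth, its maximal ideal $\mathfrak{m}$ contains a nonzerodivisor $x$ on $R$; I would fix such an $x$ at the start and set $S = R/xR$.

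The plan is then as follows. First, apply Theorem \ref{thmARC}(2) to this $x$: the hypothesis that $R$ satisfies $\sacc$ lets us conclude that $S = R/xR$ satisfies $\arc$. Second, feed this back into Theorem \ref{thmARC}(1) with the same $x$: from the fact that $R/xR$ satisfies $\arc$, it follows that $R$ itself satisfies $\arc$. Combining these two implications yields precisely the desired conclusion.

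There is no real obstacle here beyond correctly identifying that the positive depth hypothesis serves no purpose other than guaranteeing the existence of a nonzerodivisor $x \in \mathfrak{m}$, which is exactly what is required to invoke both parts of Theorem \ref{thmARC}. The only thing worth double-checking is that the two implications compose in the right direction with the same choice of $x$, which they do: (2) produces $\arc$ for the quotient $R/xR$, and (1) lifts $\arc$ from the quotient back to $R$.
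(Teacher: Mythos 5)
Your proof is correct and is exactly the argument the paper intends when it labels this a corollary of Theorem~\ref{thmARC}: the positive depth hypothesis supplies a nonzerodivisor $x\in\mathfrak{m}$, part (2) transfers $\sacc$ on $R$ to $\arc$ on $R/xR$, and part (1) lifts $\arc$ back from $R/xR$ to $R$.
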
 

\begin{rmk} Assume $(R, \mathfrak{m})$ is a local ring with $\depth R=0$. Then $R$ satisfies $\sacc$ trivially; $\mathfrak{m}$ is an associated prime of $R$ and hence any module that has constant rank is free. Therefore, when $\depth R=0$, whether $\sacc$ implies $\arc$ or not, is equivalent to the Auslander-Reiten conjecture recorded above.
\end{rmk}

We do not know whether $\sac$ and $\sacc$ are equivalent conditions or not, but $\ac$ and $\sacc$ are not, that is,  $\sacc$ is a weaker condition than $\ac$. We explain this as follows.
(Recall that it follows from Theorem \ref{thmCH} that $\ac$ implies $\sac$.) 

Huneke, \c{S}ega and Vraciu \cite[(4.1)(1)]{HSV} proved that local rings $(R,\mathfrak{m})$ with $\mathfrak{m}^3=0$ satisfy $\sac$.
More precisely, they proved that if $R$ is such a local ring and $M$ is an $R$-module with $\Ext^{i}_{R}(M, M\oplus R)=0$ for $i= n+1, n+2, n+3, n+4$ for some positive integer $n$, then $M$ is free.
On the other hand, Jorgensen and \c{S}ega \cite[(3.3)(2)]{JoS} gave an example of a commutative local ring $(S,\mathfrak{n})$ with $\mathfrak{n}^3=0$ that does not satisfy $\ac$.
Setting $R=S[[X]]$, we see from \cite[(2.3)]{CH2} that $R$ does not satisfy $\ac$.
Moreover, by Theorem \ref{thmARC}(1), $R$ satisfies $\sac$ since $S=R/(X)$ satisfies $\sac$.
These observations show that there exist local rings (of arbitrary depth) that satisfy $\sac$ (and hence $\sacc$) but fail to satisfy $\ac$.

Recently Diveris \cite{Di} defined the following condition for left Noetherian rings $R$, where $\text{ext.deg}(M)=\sup\{n\mid\Ext^{n}_{R}(M,M)\neq 0 \}$:
\begin{quote}
\begin{enumerate}
\item[$\fed$]
The supremum of $\text{ext.deg}(M)$ is finite, where $M$ runs through the $R$-modules with $\text{ext.deg}(M)<\infty$.
\end{enumerate}
\end{quote}
He proved that, when $R$ satisfies $\fed$, the conditions (1) and (2) recorded in the discussion just after Theorem \ref{thmCH} are equivalent, that is, $\fed$ implies $\sac$.
Moreover, he proved that if $R$ satisfies $\fed$, then it satisfies the following {\em generalized Auslander-Reiten condition}, which was also studied in \cite{Wei2,Wei1}:
%Although this condition can be defined over any ring $A$, for the purpose of this article, we recall it over commutative Noetherian local rings $R$ (cf. \cite{Di,Wei2,Wei1}):
\begin{quote}
\begin{enumerate}
\item[$\garc$]
For each $R$-module $M$, if there exists a nonnegative integer $n$ such that $\Ext^{>n}_{R}(M,M\oplus R)=0$, then $\pd_{R}M\leq n$.
\end{enumerate}
\end{quote}
We should note that there exists a self-injective noncommutative Artin algebra that fails to satisfy this condition $\garc$ (cf. \cite{Sch}).
We do not know whether all commutative rings satisfy $\garc$ or not.

It is interesting that $\sac$ and $\garc$ are indeed equivalent statements.
Before recording our observation, we introduce a modified version of $\garc$:
\begin{quote}
\begin{enumerate}
\item[$\garcc$]
For each $R$-module $M$, if $\Ext^{\gg 0}_{R}(M,M\oplus R)=0$, then $\pd_{R}M<\infty$.
\end{enumerate}
\end{quote}

\begin{thm}\label{obs}
Let $R$ be a local ring.
The following are equivalent.
\begin{enumerate}[\rm(1)]
\item
$R$ satisfies $\garcc$.
\item
$R$ satisfies $\garc$.
\item
$R$ satisfies $\sac$.
\end{enumerate}
\end{thm}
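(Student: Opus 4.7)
I would prove the cycle $(2)\Rightarrow(1)\Rightarrow(3)\Rightarrow(2)$. The implication $(2)\Rightarrow(1)$ is immediate: the hypothesis of $\garcc$ supplies some $n$ with $\Ext^{>n}_R(M,M\oplus R)=0$, and $\garc$ then yields $\pd_R M\le n<\infty$. The implication $(1)\Rightarrow(3)$ is only slightly less so: given $M$ satisfying the hypotheses of $\sac$, one has $\Ext^{\gg 0}_R(M,M\oplus R)=0$, so $\garcc$ forces $\pd_R M<\infty$; since the last differential in a minimal free resolution has entries in $\mathfrak m$, its $R$-dual fails to be surjective in the top degree, giving $\Ext^{\pd M}_R(M,R)\ne 0$ whenever $\pd M>0$. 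Together with $\Ext^{>0}_R(M,R)=0$ this forces $M$ to be free, and the conclusion of $\sac$ is automatic.

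The substantive step is $(3)\Rightarrow(2)$. Take $M$ with $\Ext^{>n}_R(M,M\oplus R)=0$ and set $N=\Omega^n M$. Shifting the first variable gives $\Ext^{>0}_R(N,R)=0$. For $\Ext^i_R(N,N)$ with $i>n$, I would first shift the first variable to reach $\Ext^{i+n}_R(M,\Omega^n M)$, then iteratively shift the second variable back using $\Ext^{>n}_R(M,R)=0$ to kill the free intermediate terms in the long exact sequences; this lands at $\Ext^i_R(M,M)$, which vanishes by hypothesis. Hence $\Ext^{\gg 0}_R(N,N)=0$, and $\sac$ applied to $N$ upgrades this to $\Ext^{>0}_R(N,N)=0$.

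The key second move is to apply $\sac$ once more, now to the auxiliary module $Y=N\oplus\Omega N$. A direct computation using $0\to\Omega N\to F\to N\to 0$ and the vanishing just obtained shows $\Ext^{>0}_R(Y,R)=0$, $\Ext^{\ge 2}_R(Y,Y)=0$, and that three of the four summands of $\Ext^1_R(Y,Y)$---namely $\Ext^1_R(N,N)$, $\Ext^1_R(\Omega N,N)\cong\Ext^2_R(N,N)$, and $\Ext^1_R(\Omega N,\Omega N)$---are zero. The only potentially nonzero piece is $\Ext^1_R(N,\Omega N)$, which $\sac$ then forces to vanish. The tail of the long exact sequence obtained by applying $\Hom_R(N,-)$ to $0\to\Omega N\to F\to N\to 0$, collapsed using $\Ext^1_R(N,R)=0$, identifies $\Ext^1_R(N,\Omega N)$ with $\Hom_R(N,N)$ modulo endomorphisms factoring through a free module, i.e.\ with $\lhom_R(N,N)$. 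Its vanishing says $\id_N$ factors through a free, so $N$ is a direct summand of a free and hence free, whence $\pd_R M\le n$.

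The main obstacle I expect is locating the right auxiliary module for the second application of $\sac$: the supporting Ext-shifts are entirely routine, but the punchline depends on spotting that among $\Ext^i_R(Y,Y)$ only $\Ext^1_R(N,\Omega N)$ survives, and that this group is precisely the stable-$\Hom$ obstruction $\lhom_R(N,N)$ to $N$ being free. Once that observation is in place, the double application of $\sac$---first to $N$ to get ordinary self-Ext vanishing, then to $Y$ to extract freeness---makes the argument go through.
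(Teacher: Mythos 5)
Your proposal is correct and uses essentially the same argument as the paper: the key step in $(3)\Rightarrow(2)$ is the same auxiliary module $N\oplus\Omega N$, and the punchline is the same identification of $\Ext^1_R(N,\Omega N)$ as the obstruction to $N$ being a free summand. The only differences are cosmetic: you compress the paper's $(1)\Rightarrow(2)\Rightarrow(3)$ into a direct $(1)\Rightarrow(3)$ (using the same $\Ext^{\pd M}_R(M,R)\ne0$ fact), and in $(3)\Rightarrow(2)$ you invoke $\sac$ twice (once on $N$, once on $N\oplus\Omega N$), whereas the paper skips the preliminary application to $N$ and verifies directly that $\Ext^{>n+1}_R(N\oplus\Omega N,N\oplus\Omega N)=0$ before applying $\sac$ once.
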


\begin{proof}
It is trivial that $\garc$ implies $\garcc$.
Let $M$ be an $R$-module with $\pd_RM=n<\infty$.
Then $\Ext^n_{R}(M,R)\ne0$ by \cite[Lemma 1(iii), page 154]{Mat}.
This observation yields that $\garcc$ implies $\garc$.

Assume that $R$ satisfies $\garc$ and that $\Ext_{R}^{>0}(M,R)=\Ext_{R}^{\gg0}(M,M)=0$.
Then there is a nonnegative integer $n$ with $\Ext_{R}^{>n}(M,R)=\Ext_{R}^{>n}(M,M)=0$.
The condition $\garc$ implies $\pd_RM\leq n$.
Since $\Ext_{R}^{>0}(M,R)=0$, we have $n=0$ by \cite[Lemma 1(iii), page 154]{Mat}.
Hence $M$ is free, and we have $\Ext_{R}^{>0}(M,M)=0$.
It follows that $R$ satisfies $\sac$.

Next assume that $R$ satisfies $\sac$ and that $\Ext_{R}^{>n}(M,M\oplus R)=0$ for some $n\ge0$.
Setting $N=\Omega^nM$, we have $\Ext_{R}^{>0}(N,R)=0$.
There are isomorphisms
$$
\Ext_R^i(N,N)\cong\Ext_R^{i-1}(N,\Omega^{n-1}M)\cong\cdots\cong\Ext_R^{i-n}(N,M)\cong\Ext_R^i(M,M)=0
$$
for all $i>n$.
Hence $\Ext_{R}^{>n}(N,N\oplus R)=0$.
Now put $X=N\oplus\Omega N$.
Then for $i>n$ we have:
\begin{align*}
\Ext_R^{i+1}(X,X)
& \cong \Ext_R^{i+1}(N,N)\oplus\Ext_R^{i+1}(N,\Omega N)\oplus\Ext_R^{i+2}(N,N)\oplus\Ext_R^{i+2}(N,\Omega N)\\
& \cong \Ext_R^{i+1}(N,N)\oplus\Ext_R^i(N,N)\oplus\Ext_R^{i+2}(N,N)\oplus\Ext_R^{i+1}(N,N)=0.
\end{align*}
Since $R$ satisfies $\sac$, we have that $\Ext_{R}^{>0}(X,X)=0$, which implies $\Ext_{R}^{1}(N,\Omega N)=0$.
This shows that $N$ is free, and hence $\pd_RM\leq n$.
Thus $R$ satisfies $\garc$.
\end{proof}

Corollary \ref{corARC}, Theorem \ref{obs} and the results discussed above particularly yield the following diagram for commutative local Noetherian rings, where for the implication $\sacc\Longrightarrow\arc$ we assume that the local rings considered have positive depth.
$$
\xymatrix{
& \garcc \ar@{<=>}[d] & \\
& \garc \ar@{<=>}[d] & \\
\fed  \ar@{=>}[r] & \sac \ar@{=>}[r]\ar@{=>}[d]<1.5ex>|= & \sacc \ar@{=>}[r] & \arc \\
& \ac \ar@{=>}[u]
}
$$
We refer the reader to \cite{Di} for details on the relation of the conditions $\fed$ and $\ac$.
It seems worthwhile to pose a question that follows from our previous discussions:

\begin{ques}\label{SACC}
Let $R$ be a local ring that has positive depth.
Assume that $R$ satisfies $\sacc$.
Then does $R$ satisfy $\sac$?
\end{ques}

In view of Theorem \ref{thmARC}, Question \ref{SACC} is equivalent to the following: if $x$ is a nonzerodivisor on a local ring $R$ satisfying $\sacc$, then does $R/xR$ satisfy $\sac$?

\section{Further remarks on the Auslander-Reiten conjecture}

Huneke and Leuschke \cite[1.3]{HL} studied the Auslander-Reiten conjecture and proved the following result:

\begin{thm}[Huneke-Leuschke]\label{Hu-Le}
Let $R$ be a $d$-dimensional complete local Cohen-Macaulay ring which is locally a complete intersection in codimension one.
Assume that $M$ is a maximal Cohen-Macaulay $R$-module of constant rank such that $\Ext_{R}^{i}(M,M)=\Ext_{R}^{i}(M^{\ast},R)=\Ext_{R}^{i}(M,R)=0$ for all $1\le i\le d$.
In the case $d=1$ assume further that $\Ext_{R}^{2}(M,M)=0$ holds\,\footnote[5]{\,This is a missing condition in their original statement. The authors communicated with Huneke and Leuschke for this typo.}.
If $R$ is Gorenstein or if $R$ contains $\QQ$, then $M$ is free.
\end{thm}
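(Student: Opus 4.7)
My plan splits cleanly by what is given, so I would first reduce to the case that $M$ is locally free on the punctured spectrum $U_R$, and then treat the Gorenstein and characteristic-zero cases separately. For the reduction: at any height-one prime $\p$, the ring $R_\p$ is a complete intersection by hypothesis, and $M_\p$ inherits the vanishing $\Ext^i_{R_\p}(M_\p, M_\p \oplus R_\p) = 0$ for $1 \le i \le d$, which is well beyond the Auslander-Ding-Solberg threshold for complete intersections. By the Auslander-Reiten conjecture over complete intersections, $M_\p$ is free at every height-one prime; an induction on $\dim R$ (the theorem's hypotheses localize to $R_\p$ with the same form) then promotes this to freeness of $M_\p$ at every non-maximal prime, so $M$ is locally free on $U_R$.

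In the Gorenstein case, $M$ is totally reflexive by \ref{2.8}(1), and Theorem~\ref{Araya} applies as soon as $\cext_R^{d-1}(M, M)$ vanishes. For $d \ge 2$, $\cext_R^{d-1}(M, M) = \Ext_R^{d-1}(M, M) = 0$ by hypothesis, finishing the argument. For $d = 1$ one instead needs $\cext_R^0(M, M) = \lhom_R(M, M) = 0$, and here the extra hypothesis $\Ext^2(M, M) = 0$ is used together with the Gorenstein duality of Corollary~\ref{ARD} (which swaps Tate-degree $i$ with $(d-1)-i$) to pass from finite-degree Ext-vanishing to the stable degree-zero vanishing Araya requires.

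In the characteristic-zero case, the constant rank $r$ of $M$ is a unit in $R$, so the normalized trace $\operatorname{End}_R(M) \to R$ splits the scalar inclusion $R \hookrightarrow \operatorname{End}_R(M)$, realizing $R$ as an $R$-module direct summand of $\operatorname{End}_R(M)$. Combined with the canonical isomorphism $M^\ast \tensor_R M \xrightarrow{\sim} \operatorname{End}_R(M)$ over $U_R$, together with depth-count arguments that use the MCM hypothesis and $\Ext^{>0}(M, R) = \Ext^{>0}(M^\ast, R) = 0$ to control the finite-length discrepancy at $\mathfrak{m}$, a Huneke-Wiegand style argument on the tensor product $M^\ast \tensor_R M$ then forces $M$ to be free. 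The main obstacle I anticipate is the $d = 1$ Gorenstein base case: extracting $\lhom_R(M, M) = 0$ from the finite-degree Ext-vanishing data is the most delicate Tate-cohomological step and is precisely why the additional $\Ext^2(M, M) = 0$ hypothesis appears; the characteristic-zero argument is conceptually cleaner, but transporting the trace splitting past the free locus is its key subtlety, and it is exactly the $\Ext$-vanishing on $M^\ast$ that makes this transport possible.
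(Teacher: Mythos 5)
The paper does not prove Theorem~\ref{Hu-Le} at all: it is quoted verbatim from Huneke--Leuschke \cite[1.3]{HL} (with a footnote correcting a typo), and the paper's own contribution in this direction is the distinct Proposition~\ref{newprop}, which works over Cohen--Macaulay rings under a hypothesis $M^\ast\cong M^\dag$ and is proved via Theorems~\ref{Thm1}, \ref{Thm2}, \ref{Thm3}. So there is no ``paper's own proof'' to compare against; I can only evaluate your argument on its own terms and against Huneke--Leuschke's original strategy.

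Your $d\ge2$ Gorenstein case is fine as a route \emph{once} $M$ is known to be locally free on $U_R$: $\cext_R^{d-1}(M,M)=\Ext_R^{d-1}(M,M)=0$ and Theorem~\ref{Araya} applies. (This is a post-2009 shortcut Huneke--Leuschke did not have.) The characteristic-zero branch via the normalized trace splitting $R\hookrightarrow\operatorname{End}_R(M)$ and a depth count on $M^\ast\tensor_R M$ is also the right idea and is in spirit what \cite{HL} do. But there are two genuine gaps.

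First, the $d=1$ Gorenstein case as you describe it does not work. For $d=1$, Araya's theorem asks for $\cext_R^0(M,M)=\lhom_R(M,M)=0$, and AR-duality (Corollary~\ref{ARD}) at $d=1$ sends degree $i$ to $-i$: from $\Ext^1(M,M)=\Ext^2(M,M)=0$ you only learn $\cext_R^{\pm1}(M,M)=\cext_R^{\pm2}(M,M)=0$, while at degree $0$ duality merely asserts $\cext_R^0(M,M)\cong\cext_R^0(M,M)^\vee$, which is vacuous. Worse, $\lhom_R(M,M)=0$ is literally \emph{equivalent} to $M$ being free (the identity factors through a free module), so ``passing to stable degree-zero vanishing'' is not a step one can make---it is the conclusion itself. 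The actual role of the hypothesis $\Ext_R^2(M,M)=0$ in \cite{HL} (and in Proposition~\ref{newprop}) is to feed a depth-count on the tensor product $M\tensor_R M^\ast$ (via results of Theorem~\ref{Thm2} type), not a Tate-cohomological duality.

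Second, the reduction to ``locally free on $U_R$'' is stated too quickly. You invoke ADS (ARC for complete intersections) at height-one primes, but ADS requires $\Ext_{R_\p}^{>0}(M_\p,M_\p\oplus R_\p)=0$ in \emph{all} positive degrees, whereas you only have the finite window $1\le i\le d$ after localizing. To promote a finite window to full vanishing over a complete intersection $R_\p$ one needs a rigidity/window result whose required length depends on the codimension of $R_\p$, which is not controlled by $d$. This can be repaired (e.g., running the induction so that the inductive hypothesis itself gives freeness of $M_\p$ rather than attempting a direct appeal to ADS), but as written it is a gap, not merely a compression.
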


Theorem \ref{Hu-Le} provides an important class of Cohen-Macaulay local rings over which the Auslander--Reiten conjecture holds.
Therefore we would like to examine whether the hypothesis that ``$R$ is either Gorenstein or contains $\QQ$" is necessary in Theorem \ref{Hu-Le}.
We do not know whether it is possible or not to remove that assumption, however, following the arguments of the proof of Theorem \ref{Hu-Le}, we deduce that understanding the modules $M$ such that $M^{\ast} \cong M^{\dag}$ may shed light on the Auslander--Reiten conjecture over Cohen-Macaulay local rings that are not necessarily Gorenstein (notice, when $R$ is Gorenstein, $M^{\ast}=\Hom(M,R) \cong \Hom(M,\omega)=M^{\dag}$).
Indeed we have the following concrete statement which shows that the main conclusion of Theorem \ref{Hu-Le} holds under weaker hypotheses for modules $M$ with $M^{\ast} \cong M^{\dag}$.

\begin{prop}\label{newprop}
Let $R$ be a Cohen-Macaulay local ring of dimension $d\ge2$ with a canonical module $\omega$.
Let $M$ be an $R$-module such that:
\begin{enumerate}[\rm(1)]
\item
$M$ is reflexive,
\item
$M$ is locally free on $U_{R}$,
\item
$\Ext_{R}^{i}(M^{\ast},R)=0=\Ext_{R}^{i}(M,R)$ for all $i=1, \dots,d$,
\item
$\Ext_{R}^{d}(M,M)=\Ext_{R}^{d-1}(M,M)=0$.
\end{enumerate}
Then $\depth(M\otimes_{R}M^{\ast}\otimes_{R}\omega)\geq 2$ holds, that is, $M\otimes_{R}M^{\ast}\otimes_{R}\omega$ is reflexive.
In particular, if $M^{\ast} \cong M^{\dag}$, then $M$ is free.
\end{prop}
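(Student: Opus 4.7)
The idea is to follow the template of the Huneke--Leuschke proof of Theorem~\ref{Hu-Le}, using the generalized Auslander--Reiten duality of Section~3 (Theorem~\ref{d}) in place of the Gorenstein duality $(-)^{\ast}=(-)^{\dag}$ that is invoked there. First I would upgrade hypotheses (1) and (3) to the statement that both $M$ and $M^{\ast}$ are maximal Cohen--Macaulay. Dualizing a minimal free resolution $F_{\bullet}\to M^{\ast}\to 0$ into $R$ and using $\Ext^{i}_{R}(M^{\ast},R)=0$ for $1\le i\le d$, together with the reflexivity $M\cong M^{\ast\ast}$, produces an exact complex
\[
0\to M\to F_{0}^{\ast}\to F_{1}^{\ast}\to\cdots\to F_{d-1}^{\ast},
\]
exhibiting $M$ as a $d$-th syzygy, so $\depth_{R}M=d$. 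The symmetric argument, applied to $M^{\ast}$ and using $\Ext^{i}_{R}(M,R)=0$, shows that $M^{\ast}$ is MCM, and $M^{\ast}$ is locally free on $U_{R}$ by~(2).

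The main step is to set up a four-term Auslander--Bridger type exact sequence
\begin{multline*}
0\to\Ext^{1}_{R}(\Tr M,\,M^{\ast}\tensor\omega)\to M\tensor M^{\ast}\tensor\omega\\
\to\Hom_{R}(M^{\ast},\,M^{\ast}\tensor\omega)\to\Ext^{2}_{R}(\Tr M,\,M^{\ast}\tensor\omega)\to 0,
\end{multline*}
obtained by dualizing a partial free presentation of $M$ and applying $-\tensor(M^{\ast}\tensor\omega)$. Using the stable isomorphism $\Omega^{2}\Tr M\overset{\rm st}{\cong}M^{\ast}$ from~\ref{2.5}(2), together with Lemmas~\ref{a} and~\ref{b}, I would rewrite the two outer Ext-terms as Ext-modules involving $M$ and $\omega$. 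Invoking local duality and Theorem~\ref{d} -- applicable because the relevant $\lhom_{R}$-modules have finite length thanks to the local freeness in~(2) -- these error terms would be identified with the Matlis duals of $\Ext^{d-1}_{R}(M,M)$ and $\Ext^{d}_{R}(M,M)$, both of which vanish by hypothesis~(4). Meanwhile $\Hom_{R}(M^{\ast},M^{\ast}\tensor\omega)$ has depth at least $2$, because $M^{\ast}$ is MCM and $M^{\ast}\tensor\omega$ inherits positive depth from $\omega$. A depth chase through the sequence then yields $\depth_{R}(M\tensor M^{\ast}\tensor\omega)\ge 2$, which is the reflexivity statement for $d\ge 2$.

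Finally, for the ``in particular'' clause, the assumption $M^{\ast}\cong M^{\dag}$ turns the reflexive module into $M\tensor M^{\dag}\tensor\omega$, and the natural evaluation pairing $M\tensor M^{\dag}\to\omega$, combined with the positive constant rank of $M$ on $U_{R}$, enables the Huneke--Leuschke end-game: the trace of the pairing contains a nonzerodivisor, and the reflexivity just proved, together with depth considerations, forces the pairing to be perfect, so $M$ is free. The principal obstacle throughout is precisely the identification of the two outer Ext-error terms with Matlis duals of $\Ext^{d-1}_{R}(M,M)$ and $\Ext^{d}_{R}(M,M)$; this crossover between $(-)^{\ast}$ and $(-)^{\dag}$ is invisible in the Gorenstein setting (where $R=\omega$) but is exactly what Theorem~\ref{d} was designed to handle, so it is the step that genuinely upgrades the Huneke--Leuschke argument to the Cohen--Macaulay case.
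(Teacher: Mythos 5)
Your plan is genuinely different from the paper's proof, but as stated it has a gap that I think is fatal. You want to control the outer terms of the Auslander--Bridger four-term exact sequence by rewriting $\Ext^{1}_{R}(\Tr M,M^{\ast}\otimes\omega)$ and $\Ext^{2}_{R}(\Tr M,M^{\ast}\otimes\omega)$ via Lemma~\ref{b} and Theorem~\ref{d}. But both of those results require the module in the first slot to be \emph{totally reflexive}, i.e.\ $\Ext^{i}_{R}(X,R)=\Ext^{i}_{R}(X^{\ast},R)=0$ for \emph{all} $i>0$. Hypothesis~(3) of Proposition~\ref{newprop} only gives this vanishing in the finite range $1\le i\le d$, and nothing in (1)--(4) forces $M$ (hence $M^{\ast}$ or $\Tr M$) to have finite Gorenstein dimension. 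Consequently $\Omega^{-i}$ is not even defined here, the stable isomorphism $\Omega^{2}\Tr M\overset{\rm st}{\cong}M^{\ast}$ cannot be desuspended to move between $\Ext^{1,2}(\Tr M,-)$ and Tate cohomology of $M$, and Theorem~\ref{d} simply does not apply. This is precisely why the paper does \emph{not} invoke Theorem~\ref{d} in the proof of this proposition, even though it proved that theorem for exactly this circle of ideas.

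The paper's route is both shorter and correctly calibrated to the finite Ext-vanishing hypothesis. After showing $M$ and $M^{\ast}$ are MCM (the step you also carry out), it sets $N=M^{\ast}\otimes_{R}\omega$, uses Theorem~\ref{Thm1} to get $N\cong M^{\dag}$ MCM, deduces $N^{\dag}\cong M^{\dag\dag}\cong M$, and then applies the Celikbas--Dao depth criterion Theorem~\ref{Thm2} with $n=2$: one has $\depth(M\otimes_{R}N)\ge 2$ if and only if $\Ext^{d-1}_{R}(M,N^{\dag})=\Ext^{d}_{R}(M,N^{\dag})=0$, which is exactly hypothesis~(4). That criterion is stated purely in terms of two Ext-vanishings and needs no Tate cohomology, so it is the right substitute for the Auslander--Reiten-duality step in this non--totally-reflexive setting. (Your intuition that an Auslander--Bridger-type sequence plus local duality should be lurking behind the scenes is not unreasonable --- it plausibly lives inside the proof of \cite[3.2]{CeD} --- but quoting Theorem~\ref{d} directly does not work.) Finally, the ``in particular'' clause is immediate: once $M\otimes_{R}M^{\ast}\cong M\otimes_{R}N$ is reflexive, Theorem~\ref{Thm3} (Auslander) gives that $M$ is free; the trace-of-the-pairing argument you sketch is unnecessary and, as a bonus, Theorem~\ref{Thm3} does not need the constant-rank hypothesis you invoke.
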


For the convenience of the reader we record the results which we make use of to prove Proposition \ref{newprop}.

\begin{thm}\cite[1.4]{HL}\label{Thm1}
Let $R$ be a $d$-dimensional Cohen-Macaulay local ring with a canonical module $\omega$, and let $T$ be a maximal Cohen-Macaulay $R$-module.
If $\Ext^{i}_{R}(T,R)=0$ for all $i=1, \dots, d$, then $\omega\otimes_{R}T\cong (T^{\ast})^{\dagger}$ is maximal Cohen-Macaulay.
\end{thm}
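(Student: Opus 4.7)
The plan is to first rewrite $M\otimes_R M^*\otimes_R\omega$ in a more convenient form, then extract the depth bound from hypothesis~(4) via a spectral-sequence argument, and finally to deduce freeness, under $M^*\cong M^\dag$, from the trace map $M\otimes_R M^*\to\Hom_R(M,M)$. First I would check that hypotheses~(1) and~(3) force both $M$ and $M^*$ to be maximal Cohen-Macaulay (by the standard Auslander-Bridger syzygy criterion: dualizing a minimal free resolution of $M^*$ and using $\Ext^i_R(M^*,R)=0$ for $1\le i\le d$ exhibits $M\cong M^{**}$ as a $(d+1)$-st syzygy, and the symmetric argument handles $M^*$). Theorem~\ref{Thm1} applied to $T=M$ and then to $T=M^*$ yields $\omega\otimes_R M\cong (M^*)^\dag$ and $\omega\otimes_R M^*\cong M^\dag$, both MCM; consequently $M\otimes_R M^*\otimes_R\omega\cong M\otimes_R M^\dag$.

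The main step is to show $\depth(M\otimes_R M^\dag)\ge 2$. Since $M^\dag$ is MCM we have $\Ext^{>0}_R(M^\dag,\omega)=0$ and $M^{\dag\dag}\cong M$, so derived tensor-Hom adjunction collapses to $\RHom_R(M\otimes^{\mathbf{L}}_R M^\dag,\omega)\cong\RHom_R(M,M)$, and this furnishes a Grothendieck spectral sequence
$$
E_2^{p,q}=\Ext^p_R(\Tor^R_q(M,M^\dag),\omega)\;\Longrightarrow\;\Ext^{p+q}_R(M,M).
$$
Because $M$ is locally free on $U_R$, each $\Tor^R_q(M,M^\dag)$ with $q\ge 1$ has finite length, so local duality forces $E_2^{p,q}=0$ unless $q=0$ or $p=d$. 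The resulting $L$-shape of the $E_2$-page forces every differential out of a nonzero term to land in a zero term; hence the sequence degenerates at $E_2$, and for each $n\le d$ the only nonzero $E_\infty$-term on the antidiagonal $p+q=n$ is $E_\infty^{n,0}=\Ext^n_R(M\otimes_R M^\dag,\omega)\cong\Ext^n_R(M,M)$. Applying this for $n=d-1,d$ together with hypothesis~(4) yields $\Ext^{d-1}_R(M\otimes_R M^\dag,\omega)=\Ext^d_R(M\otimes_R M^\dag,\omega)=0$; by local duality this is the vanishing of $H^0_\mathfrak{m}$ and $H^1_\mathfrak{m}$ of $M\otimes_R M^\dag$, so $\depth(M\otimes_R M^\dag)\ge 2$. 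Since $M\otimes_R M^\dag$ is locally free on $U_R$, depth at least $2$ amounts to Serre's $S_2$ condition, and hence to reflexivity.

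For the final clause, assume $M^*\cong M^\dag$, so $M\otimes_R M^*$ inherits depth $\ge 2$. The natural trace map $\tau\colon M\otimes_R M^*\to\Hom_R(M,M)$, $m\otimes f\mapsto(x\mapsto f(x)m)$, is an isomorphism after localizing at any $p\in U_R$, so $\ker\tau$ and $\coker\tau$ have finite length. The embedding $\ker\tau\hookrightarrow H^0_\mathfrak{m}(M\otimes_R M^*)=0$ shows $\tau$ is injective; and since $\Hom_R(M,M)$ has depth $\ge 2$ (because $M$ is MCM and $d\ge 2$), the local-cohomology long exact sequence of $0\to M\otimes_R M^*\to\Hom_R(M,M)\to\coker\tau\to 0$ places $\coker\tau$ inside $H^1_\mathfrak{m}(M\otimes_R M^*)=0$. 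Thus $\tau$ is an isomorphism, which produces a dual basis $\{(m_i,f_i)\}\subset M\times M^*$ for $M$; this forces $M$ to be finitely generated projective and therefore free over the local ring $R$.

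The main obstacle I expect is the spectral-sequence bookkeeping that yields $\Ext^n_R(M\otimes_R M^\dag,\omega)\cong\Ext^n_R(M,M)$ in the range $n\le d$; once that identification is in hand, the local-duality reduction to vanishing of $\Ext^{d-1}_R(M,M)$ and $\Ext^d_R(M,M)$, and the trace-map analysis in the final clause, are essentially routine.
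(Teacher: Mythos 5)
Your proof establishes Proposition \ref{newprop}, not Theorem \ref{Thm1}. The statement you were asked about is the Huneke--Leuschke lemma \cite[1.4]{HL}: given a maximal Cohen--Macaulay module $T$ with $\Ext^i_R(T,R)=0$ for $1\le i\le d$, show that $\omega\otimes_R T\cong(T^\ast)^\dag$ and that this module is maximal Cohen--Macaulay. Your argument does not address this at all; indeed it \emph{invokes} Theorem \ref{Thm1} twice as a black box (``Theorem \ref{Thm1} applied to $T=M$ and then to $T=M^\ast$ yields \dots''), so as a purported proof of that theorem it is circular. Every other step in your writeup---the spectral sequence from $\RHom_R(M\otimes^{\mathbf L}_R M^\dag,\omega)\cong\RHom_R(M,M)$, the local-duality vanishing in degrees $d-1$ and $d$, and the trace-map argument at the end---is driven by hypotheses (1)--(4) of Proposition \ref{newprop}, which simply do not appear in Theorem \ref{Thm1}.

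For what it is worth, as an account of Proposition \ref{newprop} your argument is essentially sound and tracks the paper's own proof, except that you inline the two auxiliary results the paper cites: your hyper-$\Ext$ spectral-sequence computation reproves the special case of Theorem \ref{Thm2} (that is, \cite[3.2]{CeD}) that is needed, and your trace-map analysis reproves the relevant case of Theorem \ref{Thm3} (Auslander's freeness criterion). But none of this bears on Theorem \ref{Thm1}. A proof of that theorem should instead begin from the $R$-dual of a minimal free resolution $F_\bullet\to T$: the hypothesis $\Ext^i_R(T,R)=0$ for $1\le i\le d$ makes $0\to T^\ast\to F_0^\ast\to\cdots\to F_{d-1}^\ast\to F_d^\ast$ exact, exhibiting $T^\ast$ as a $d$-th syzygy and hence maximal Cohen--Macaulay (so $(T^\ast)^\dag$ is maximal Cohen--Macaulay as well); one then compares the $\omega$-dual of this exact complex with $F_\bullet\otimes_R\omega$ to identify $\omega\otimes_R T$ with $(T^\ast)^\dag$. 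You should rewrite your proof along those lines rather than proving the downstream proposition.
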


\begin{thm}\label{Thm2}\cite[3.2]{CeD}
Let $R$ be a $d$-dimensional Cohen-Macaulay local ring with a canonical module $\omega$, and let $M,N$ be $R$-modules.
Assume that $M$ is locally free on $U_{R}$ and that $N$ is maximal Cohen-Macaulay.
Let $n$ be an integer such that $1\leq n \leq \depth M$.
Then $\depth(M\otimes_{R}N)\geq n$ if and only if $\Ext^i_R(M,N^{\dagger})=0$ for all $i=d-n+1,\dots, d-1,d$.
\end{thm}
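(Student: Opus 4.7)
The plan is to chain Theorems \ref{Thm1} and \ref{Thm2} to obtain the depth inequality, and then---under the additional hypothesis $M^*\cong M^\dag$---to deduce freeness of $M$ by analyzing the natural evaluation map $\psi\colon M\otimes_R M^*\to\Hom_R(M,M)$. For Stage I, I would first observe that $M$ and $M^*$ are both maximal Cohen-Macaulay: reflexivity (1) makes $M^*$ reflexive too, and the Ext-vanishing in (3) together with the Evans--Griffith characterization of $k$-th syzygies forces $M$ and $M^*$ to be $(d+2)$-syzygies; over the $d$-dimensional Cohen-Macaulay ring $R$ a $d$-syzygy has depth $d$. Next, Theorem \ref{Thm1} applied to $T=M^*$ (MCM, with the correct Ext-vanishing) gives that $\omega\otimes_R M^*\cong (M^{**})^\dag\cong M^\dag$ is MCM, where reflexivity of $M$ supplies the second isomorphism. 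Finally, I apply Theorem \ref{Thm2} with $N:=\omega\otimes_R M^*\cong M^\dag$ and $n=2$: hypothesis (2) gives $M$ locally free on $U_R$, the previous step gives $N$ MCM, and $2\le \depth M=d$ since $d\ge 2$. Canonical-module biduality for MCM modules supplies $N^\dag\cong M^{\dag\dag}\cong M$, so the criterion $\Ext_R^i(M,N^\dag)=0$ for $i=d-1,d$ becomes exactly condition (4). The theorem then yields $\depth(M\otimes_R M^*\otimes_R \omega)\ge 2$, which is the first assertion.

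For Stage II, assume $M^*\cong M^\dag$. The isomorphism $\omega\otimes_R M^*\cong M^\dag$ established in Stage I then specializes to $\omega\otimes_R M^*\cong M^*$, so $M\otimes_R M^*\otimes_R\omega\cong M\otimes_R M^*$, and Stage I gives $\depth(M\otimes_R M^*)\ge 2$, i.e.\ $M\otimes_R M^*$ is reflexive. I would then examine the natural map $\psi\colon M\otimes_R M^*\to\Hom_R(M,M)$: because $M$ is locally free on $U_R$ by (2), $\psi$ is an isomorphism on $U_R$, so $\ker\psi$ and $\coker\psi$ are of finite length; and since both the source (just shown) and the target (as a $\Hom$ into an MCM module) satisfy $(S_2)$, a depth-counting argument forces $\ker\psi=0=\coker\psi$, i.e.\ $\psi$ is a global isomorphism. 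Pulling $\id_M\in\Hom_R(M,M)$ back to $M\otimes_R M^*$ yields a dual basis for $M$, so $M$ is projective and hence, being finitely generated over a local ring, free.

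The main obstacle I anticipate is the last step of Stage II---specifically, justifying that $\Hom_R(M,M)$ has depth at least $2$ so that the natural map $\psi$ extends from an isomorphism on $U_R$ to a global isomorphism, rather than merely furnishing an abstract isomorphism of modules. This depth bound follows from the standard fact that $\Hom_R(-,N)$ into an MCM module $N$ produces a module of depth $\ge 2$, but one has to set it up carefully with the depth lemma. If this proves delicate, a robust fallback is to invoke Auslander's classical theorem that reflexivity of $M\otimes_R M^*$, together with $M$ being MCM and locally free on $U_R$, is itself sufficient to conclude that $M$ is projective over a Cohen-Macaulay local ring.
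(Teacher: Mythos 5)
Your argument is not a proof of the stated result. Theorem~\ref{Thm2} is the depth criterion quoted from \cite[3.2]{CeD}, and your proposal \emph{invokes} it (``I apply Theorem~\ref{Thm2} with $N:=\omega\otimes_R M^*$ and $n=2$''), so as a putative proof of Theorem~\ref{Thm2} it would be circular. In fact the paper does not prove Theorem~\ref{Thm2} at all; it imports it from the reference. What you have actually written out, almost verbatim, is the paper's proof of Proposition~\ref{newprop}: the hypotheses you cite as (1)--(4) are the numbered conditions of that proposition, and your conclusion is exactly its two-part assertion.

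Measured against the proof of Proposition~\ref{newprop}, your argument is correct and takes essentially the same route. Stage~I is identical in substance: (3) forces $M$ and $M^*$ to be high syzygies hence maximal Cohen--Macaulay, Theorem~\ref{Thm1} with $T=M^*$ gives that $N:=\omega\otimes_R M^*\cong(M^{**})^\dag\cong M^\dag$ is MCM, canonical-module biduality gives $N^\dag\cong M$ so that (4) becomes the vanishing required by Theorem~\ref{Thm2}, and Theorem~\ref{Thm2} with $n=2$ yields $\depth(M\otimes_R M^*\otimes_R\omega)\ge 2$. (You cite Evans--Griffith where the paper counts dualized free resolutions directly, but the two are interchangeable here.) In Stage~II, once you note that $M^*\cong M^\dag$ gives $M\otimes_R M^*\otimes_R\omega\cong M\otimes_R M^*$, the paper simply applies Theorem~\ref{Thm3} (Auslander's theorem) and stops. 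Your primary route --- comparing $M\otimes_R M^*$ with $\Hom_R(M,M)$ via the evaluation map and killing finite-length kernel/cokernel by a depth count --- is a re-derivation of the content of Theorem~\ref{Thm3} in this special case rather than an alternative to it; it is sound (both modules have depth $\ge 2$ because $M\otimes_R M^*$ is shown to be reflexive and $M$ is MCM of depth $d\ge 2$), but it buys nothing over the direct citation, which you yourself list as the fallback. Had the task been to prove Theorem~\ref{Thm2} itself, you would instead need to run the spectral-sequence/local-duality style argument of \cite{CeD}, relating $\depth(M\otimes_R N)$ to $\Ext_R^*(M,N^\dag)$ via $\LC_{\mathfrak m}$; none of that appears here.
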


\begin{thm}\label{Thm3}{\rm(}Auslander \cite{Au}, see also \cite[3.4]{CeD}{\rm)}
Let $R$ be a Cohen-Macaulay local ring, and let $M$ be a torsionfree $R$-module.
Assume that $M_{\p}$ is free over $R_{\p}$ for each $\p\in \Spec R$ with $\htt\p\leq 1$.
If $M\otimes_{R}M^{\ast}$ is reflexive, then $M$ is free.
\end{thm}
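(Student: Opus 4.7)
The plan is to prove that $M$ is free by showing that the natural multiplication map
$\mu \colon M \otimes_R M^\ast \to \Hom_R(M, M)$, defined by $\mu(m \otimes \phi)(n) = \phi(n)\, m$, is an isomorphism. Once this is known, $\id_M$ lies in the image of $\mu$, so one may write $\id_M = \sum_{i=1}^n \mu(m_i \otimes \phi_i)$ for some $m_i \in M$ and $\phi_i \in M^\ast$; the maps $\alpha \colon M \to R^n$, $\alpha(m) = (\phi_i(m))_i$, and $\beta \colon R^n \to M$, $\beta(e_i) = m_i$, then satisfy $\beta \circ \alpha = \id_M$, realizing $M$ as a direct summand of $R^n$. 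Since $R$ is local, this forces $M$ to be free.

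The starting observation is that $\mu$ is visibly an isomorphism when its source is free, so the hypothesis that $M$ is free in codimension at most one implies $\mu_\p$ is an isomorphism whenever $\htt \p \leq 1$. Consequently, both $K := \ker \mu$ and $C := \coker \mu$ are supported in codimension $\ge 2$. For injectivity, I would use that $K \subseteq M \otimes_R M^\ast$, which is reflexive (hence $S_2$, in particular torsionfree) by hypothesis. A standard argument (prime avoidance applied to $\ann(n)$ for $n \in N$ torsionfree, together with minimality of associated primes in the Cohen-Macaulay case) shows $\Ass(N) \subseteq \Min(R)$ for any torsionfree $R$-module $N$. Hence $\Ass(K) \subseteq \Min(R)$ consists of height-zero primes; but $K_\p = 0$ for all $\p$ of height $\leq 1$, so $K$ has no associated primes and must vanish.

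For surjectivity I would argue by contradiction: suppose $C \neq 0$ and pick $\p \in \Ass(C)$, so $\htt \p \ge 2$ and $\depth C_\p = 0$. Localize at $\p$ the exact sequence
$$
0 \to M \otimes_R M^\ast \to \Hom_R(M, M) \to C \to 0.
$$
The first term has depth $\ge 2$ by reflexivity together with $\htt \p \ge 2$. The middle term $\Hom_R(M, M)$ is torsionfree (any endomorphism annihilated by a nonzerodivisor of $R$ has image killed in $M$, which is torsionfree), so by the same associated-primes reasoning $\Ass \Hom_R(M, M) \subseteq \Min(R)$ and therefore $\depth \Hom_R(M, M)_\p \ge 1$. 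The depth lemma then yields $\depth C_\p \ge \min(1, 2 - 1) = 1$, contradicting $\p \in \Ass(C)$. Hence $C = 0$, so $\mu$ is an isomorphism, and the conclusion follows.

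The main obstacle is the surjectivity step; everything else is essentially formal. The key subtlety is that the depth of $\Hom_R(M, M)$ at a height $\ge 2$ prime must be at least one for the depth lemma to force $\depth C_\p \ge 1$, and this is precisely what the torsionfree hypothesis on $M$ guarantees in the Cohen-Macaulay setting.
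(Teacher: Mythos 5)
The paper cites this result to Auslander and to \cite[3.4]{CeD} without reproducing a proof, so there is no argument in the paper itself to compare against; your proposal supplies a correct, self-contained proof, and it follows essentially Auslander's original strategy of analyzing the evaluation map $\mu\colon M\otimes_R M^\ast\to\Hom_R(M,M)$. The two depth estimates driving the argument both check out over a Cohen--Macaulay ring: reflexivity of $M\otimes_R M^\ast$ gives $\depth(M\otimes_R M^\ast)_\p\ge\min(2,\htt\p)$ (the $S_2$ bound uses $\depth R_\p=\htt\p$), so it is $\ge 2$ at the chosen $\p$ of height $\ge 2$; and torsionfreeness of $\Hom_R(M,M)$, inherited from torsionfreeness of $M$, forces $\Ass\Hom_R(M,M)\subseteq\Min(R)$, giving $\depth\Hom_R(M,M)_\p\ge 1$ there. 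Together with the observation that $\ker\mu$ and $\coker\mu$ are supported in codimension at least two, the vanishing of the kernel (a torsionfree module with no minimal primes in its support) and the depth-lemma contradiction for the cokernel go through exactly as you wrote them, and the realization of $\id_M$ in the image of $\mu$ splits $M$ off a free module, finishing the proof.
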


Now we are ready to prove Proposition \ref{newprop}; we thank Graham Leuschke for several discussions on the proof of the proposition.

\begin{proof}[Proof of Proposition \ref{newprop}]
Set $N=M^{\ast}\otimes_{R}\omega$.
It follows from (3) that $M^{\ast}$ is a $d$th syzgy, hence maximal Cohen-Macaulay.
Therefore, setting $T=M^{\ast}$, we have, by Theorem \ref{Thm1}, that $N\cong (M^{\ast\ast})^{\dagger}$ is maximal Cohen-Macaulay. Thus, by (1), $N\cong M^{\dagger}$ so that $N^{\dagger}\cong M^{\dagger\dagger}$. Furthermore, using (1) and (3), we deduce by a similar argument that $M$ is maximal Cohen-Maculay. Therefore $M^{\dagger\dagger}\cong M$ and this yields that $N^{\dagger}\cong M$. Hence, by (4) we have $\Ext^{d}_{R}(M,N^{\dagger})=\Ext^{d-1}_{R}(M,N^{\dagger})=0$.
Theorem \ref{Thm2} shows $\depth(M\otimes_{R}N)\geq 2$.
This and (2) imply that $M\otimes_{R}N=M\otimes_{R}M^{\ast}\otimes_{R}\omega$ is reflexive; see for example \cite[1.4.1(b)]{BH}.

Now assume $M^{\ast} \cong M^{\dag}$. Then $N\cong M^{\dagger}\cong M^{\ast}$ so that $M\otimes_{R}N\cong M\otimes_{R}M^{\ast}$.
Applying Theorem \ref{Thm3}, we conclude that $M$ is free.
\end{proof}

The foregoing discussion shows that the next question is worth recording:

\begin{ques}\label{wm}
Let $R$ be a Cohen-Macaulay local ring with a canonical module.
Assume that there exists a nonfree $R$-module $M$ such that $M^{\ast} \cong M^{\dag}$.
Then is $R$ Gorenstein? 
\end{ques}

We are able to give an affirmative answer to Question \ref{wm} in case the module considered is totally reflexive; we believe such a result is also interesting on its own.
However we do not know more about the existence of such modules in general.

\begin{prop}\label{partialanswer}
Let $R$ be a Cohen-Macaulay local ring with a canonical module $\omega$.
If there exists a totally reflexive $R$-module $M\ne0$ such that $M^{\ast} \cong M^{\dag}$, then $R$ is Gorenstein.
\end{prop}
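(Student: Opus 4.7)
The plan is to show that the canonical module $\omega$ has exactly one minimal generator, which is equivalent to $R$ being Gorenstein: indeed, $\mu(\omega)$ equals the Cohen--Macaulay type of $R$, and type one characterises Gorensteinness.

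The decisive step is to apply Theorem \ref{Thm1} with $T=M^{\ast}$. Since $M$ is totally reflexive, so is $M^{\ast}$; in particular $M^{\ast}$ is maximal Cohen--Macaulay and $\Ext_{R}^{i}(M^{\ast},R)=0$ for every $i>0$, so all hypotheses of Theorem \ref{Thm1} are met. That theorem then gives
$$
\omega\otimes_{R}M^{\ast}\cong (M^{\ast\ast})^{\dag}\cong M^{\dag},
$$
where the last isomorphism uses the reflexivity $M^{\ast\ast}\cong M$. Combining this with the standing hypothesis $M^{\ast}\cong M^{\dag}$ one obtains
$$
M^{\ast}\otimes_{R}\omega\cong M^{\ast}.
$$

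To finish, I would count minimal numbers of generators on both sides. For finitely generated $R$-modules $A$ and $B$ one has $\mu(A\otimes_{R}B)=\mu(A)\mu(B)$ (tensor with $k$), so the displayed isomorphism becomes $\mu(M^{\ast})\mu(\omega)=\mu(M^{\ast})$. Since $M\neq 0$ and $M\cong M^{\ast\ast}$, the module $M^{\ast}$ is nonzero, so $\mu(M^{\ast})\geq 1$, and cancelling yields $\mu(\omega)=1$. Hence $R$ has type one and is therefore Gorenstein.

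I do not expect a genuine obstacle; the proof is a one-shot invocation of Theorem \ref{Thm1} coupled with the standard interpretation of $\mu(\omega)$ as the Cohen--Macaulay type. The only small points to verify are that $M^{\ast}$ inherits total reflexivity from $M$ (so that the $\Ext$-vanishing needed for Theorem \ref{Thm1} is automatic) and that nonvanishing of $M$ transfers to $M^{\ast}$ through reflexivity.
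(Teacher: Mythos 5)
Your proof is correct, and it takes a different route from the paper's. The paper establishes $M\otimes_R\omega\cong M$ directly via a derived-category computation: it shows $M\otimes_R^{\mathbf L}\omega\cong\RHom_R(M^\ast,\omega)\cong M^{\ast\dag}\cong M^{\dag\dag}\cong M$ using \cite[(A.4.24)]{Ch} and the total reflexivity of $M$, which in particular gives $\Tor^R_{>0}(M,\omega)=0$, and then counts minimal generators. You instead reuse Theorem~\ref{Thm1} (already invoked in the proof of Proposition~\ref{newprop}) applied to $T=M^\ast$: since $M^\ast$ inherits total reflexivity from $M$, it is maximal Cohen--Macaulay with $\Ext^{>0}_R(M^\ast,R)=0$, so Theorem~\ref{Thm1} gives $\omega\otimes_R M^\ast\cong(M^{\ast\ast})^\dag\cong M^\dag\cong M^\ast$, and then you count generators. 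Both proofs end with the same observation $\mu(\omega)\,\mu(N)=\mu(N)$ for a nonzero module $N$ (respectively $N=M$ and $N=M^\ast$), forcing $\mu(\omega)=1$ and hence $R$ Gorenstein. Your version is shorter and fits naturally into the flow of Section~5 by reusing Theorem~\ref{Thm1}, at the cost of importing the (nontrivial) Huneke--Leuschke isomorphism; the paper's version is self-contained modulo the standard adjunction identities from Christensen's book and, as a by-product, records the Tor-vanishing $\Tor^R_{>0}(M,\omega)=0$.
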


\begin{proof}
We have isomorphisms
\begin{align*}
M\otimes_R^{\bf L}\omega
& \cong M\otimes_R^{\bf L}\RHom_R(R,\omega)
\cong\RHom_R(\RHom_R(M,R),\omega)\\
& \cong\RHom_R(M^\ast,\omega)
\cong M^{\ast\dag}
\cong M^{\dag\dag}
\cong M.
\end{align*}
Here, the second and third isomorphisms are obtained by \cite[(A.4.24)]{Ch} and the total reflexivity of $M$, respectively.
The fourth isomorphism follows from the fact that $M^\ast$ is maximal Cohen-Macaulay since it is totally reflexive.
These isomorphisms especially says $\Tor^{R}_{>0}(M, \omega)=0$, and hence we have an isomorphism
$$
M\otimes_R\omega\cong M.
$$
Comparing the minimal numbers of generators of both sides, we see that $\omega$ is cyclic, that is, $\omega\cong R$.
Hence $R$ is Gorenstein.
\end{proof}

Next we discuss a question of Huneke and Wiegand \cite{HW} advertised in the introduction:

\begin{ques}[Huneke-Wiegand]\label{queHW}
Is there a nonfree and torsionfree $R$-module $M$ over a one-dimensional Gorenstein domain $R$ such that $M\otimes_{R}M^{\ast}$ is torsionfree?
\end{ques}   

We refer the interested reader to \cite[page 473]{HW} for the motivation of Question \ref{queHW}.
Inspired by such a question (see also Theorem \ref{Thm3}) we define the following condition for commutative Noetherian local rings $R$:
\begin{quote}
\begin{enumerate}
\item[$\hwc$]
For every torsionfree $R$-module $M$, if $M\otimes_{R}M^{\ast}$ is reflexive, then $M$ is free.
\end{enumerate}
\end{quote}

A result of Huneke and Wiegand \cite[(3.7)]{HW} shows that all one-dimensional hypersurface local domains satisfy $\hwc$.
(We do not know whether the same result holds for all complete intersection local domains, cf. \cite{Ce}.)
Examples of local rings (with positive dimension) that do not satisfy $\hwc$ are abundant; for example, if $R=k[[X,Y]]/(XY)$, then $M=M\otimes_{R}M^{\ast}$ is reflexive for the nonfree module $M=R/(X)$.
Notice that the ring $R$ here is not an integral domain.
Furthermore the reflexivity of $M\otimes_{R}M^{\ast}$ is not a very strong assumption for the condition $\hwc$ to hold over local rings:

\begin{prop}\cite[(3.12)]{CeD}\label{p0}
Given an odd integer $d\geq 3$, there exist a $d$-dimensional local hypersurface $R$ with an isolated singularity (hence $R$ is an integral domain) and a nonfree maximal Cohen-Macaulay $R$-module $M$ such that $M\otimes_{R}M^{\ast}$ is torsionfree.
\end{prop}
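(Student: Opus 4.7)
The plan is to prove the existence statement by first constructing an explicit example in the lowest admissible dimension $d = 3$, then raising the dimension by two at a time via Knörrer periodicity.

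The key reduction is to an Ext-vanishing problem. Let $R$ be any $d$-dimensional hypersurface with isolated singularity; then $R$ is Gorenstein (so $\omega_R \cong R$ and $(-)^\dag = (-)^\ast$) and an integral domain, and every MCM $R$-module is reflexive and locally free on $U_R$. Applying Theorem \ref{Thm2} to $M$ and $N = M^\ast$ with $n = 1$, and using $(M^\ast)^\dag = M^{\ast\ast} \cong M$, one obtains that $M \otimes_R M^\ast$ is torsionfree if and only if $\Ext^d_R(M, M) = 0$. Since the minimal free resolution of an MCM module over a hypersurface is (eventually) two-periodic, for odd $d$ this is equivalent to $\Ext^1_R(M, M) = 0$. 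Thus it suffices to exhibit a nonfree Ext-rigid MCM module over some odd-dimensional hypersurface with isolated singularity.

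For the base case, I would work over a suitably chosen three-dimensional hypersurface $R_3$ of the form $k[[x_1, x_2, x_3, x_4]]/(f)$, with $f$ irreducible and the Jacobian ideal of $f$ primary to the maximal ideal (so that $R_3$ is an isolated singularity, hence a normal integral domain). Using the matrix factorization description of MCM modules over a hypersurface, I would exhibit a nonfree MCM module $M_3$ coming from a carefully chosen matrix factorization (candidates include height-one prime ideals of certain quadric hypersurfaces, or spinor-type modules arising from the Clifford algebra of a quadratic form) and verify $\Ext^1_{R_3}(M_3, M_3) = 0$ by direct computation with its presentation matrix.

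For the induction, I would invoke Knörrer periodicity. If $R$ is a hypersurface of dimension $d$ defined by $f$, then $R' = R[[u, v]]/(f + uv)$ is a hypersurface of dimension $d + 2$; irreducibility of $f$ forces that of $f + uv$, and the Jacobian condition on $R$ transfers to $R'$, so $R'$ inherits the isolated-singularity and integral-domain properties. Knörrer's theorem provides a triangle equivalence between the stable categories of MCM modules over $R$ and over $R'$ that is compatible with syzygies and $R$-duality, so the image $M'$ of $M_3$ under this equivalence is a nonfree Ext-rigid MCM $R'$-module. The reduction via Theorem \ref{Thm2} then gives torsionfreeness of $M' \otimes_{R'} (M')^\ast$. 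Iterating produces examples in every odd $d \geq 3$.

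The main obstacle is the explicit rigidity verification in the base case: not every nonfree MCM module over such an $R_3$ has vanishing $\Ext^1(-,-)$, so the matrix factorization must be chosen with care to ensure both nonfreeness and Ext-rigidity. Once the base case is in hand, the remainder of the argument---the reduction via Theorem \ref{Thm2} and the dimension-raising via Knörrer periodicity---is essentially formal.
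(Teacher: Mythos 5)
The paper does not give an internal proof of this proposition: it is cited verbatim from \cite[(3.12)]{CeD}, so there is no argument of the paper's own to compare against. Your blind attempt is therefore a genuine reconstruction, and its outline is sound. Two remarks on the reduction step: Theorem \ref{Thm2} literally gives $\depth(M\otimes_R M^\ast)\geq 1$ iff $\Ext^d_R(M,M)=0$; to pass from $\depth\geq 1$ to torsionfree one also uses that $M\otimes_R M^\ast$ is locally free on $U_R$ (because $R$ is an isolated singularity), so its torsion submodule is $\mathfrak m$-primary and hence killed by the depth condition. And in the Kn\"orrer step you need not verify irreducibility of $f+uv$ directly: $R'$ is Cohen--Macaulay of dimension $\geq 2$ with an isolated singularity, hence normal by Serre's criterion, hence a domain.

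The one real gap you flag yourself --- producing an explicit Ext-rigid nonfree MCM module over a three-dimensional hypersurface isolated singularity --- is in fact supplied elsewhere in this very paper: the Example following Question \ref{qI} exhibits $R=k[[X,Y,Z,U]]/(XZ-YU)$ (a three-dimensional hypersurface with an isolated singularity, so a normal domain) and the nonfree MCM ideal $I=(X,Y)R$ with $\Ext^1_R(I,I)=0$. By two-periodicity of MCM resolutions over a hypersurface, $\Ext^3_R(I,I)\cong\Ext^1_R(I,I)=0$, and Theorem \ref{Thm2} then gives that $I\otimes_R I^\ast$ is torsionfree, settling the base case $d=3$. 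Plugging this into your framework and iterating the Kn\"orrer construction (which preserves $\Ext^1$-rigidity since it is a triangle equivalence on stable MCM categories) yields the result for every odd $d\geq 3$. So your proposal, combined with the explicit example the paper already contains, gives a complete and correct proof of a statement the paper itself only references.
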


The following result displays the relation between the condition $\hwc$ and the Auslander-Reiten conjecture for Gorenstein domains (cf. \cite[(3.3), (3.4) and (3.14)]{CeD}):

\begin{prop}\label{HWC-ARC}
Consider the following statements.
\begin{enumerate}[\rm(1)]
\item
All Gorenstein local domains satisfy $\hwc$.
\item
All one-dimensional Gorenstein local domains satisfy $\hwc$.
\item
All Gorenstein local domains satisfy $\arc$.
\end{enumerate}
Then the implications $(1) \Longleftrightarrow (2) \Longrightarrow (3)$ hold.
\end{prop}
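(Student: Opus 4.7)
The implication $(1) \Rightarrow (2)$ is a tautology. For $(2) \Rightarrow (1)$, my plan is to reduce to the one-dimensional case by localization and then invoke Auslander's theorem \ref{Thm3}. Given a $d$-dimensional Gorenstein local domain $R$ and a torsionfree $R$-module $M$ with $M \otimes_R M^*$ reflexive, I would localize at each prime $\p$ of height one: then $R_\p$ is a one-dimensional Gorenstein local domain, and since torsionfreeness and reflexivity of finitely generated modules commute with localization (and $\Hom$ commutes with localization over a Noetherian ring), $M_\p$ is a torsionfree $R_\p$-module and $M_\p \otimes_{R_\p} (M_\p)^* \cong (M \otimes_R M^*)_\p$ is reflexive. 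Hypothesis $(2)$ then forces $M_\p$ to be free. Combined with the trivial freeness at the generic prime, this shows $M$ is free at every prime of height at most one, so Theorem \ref{Thm3} concludes $M$ is free.

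For $(2) \Rightarrow (3)$, my plan is induction on $d = \dim R$. Let $R$ be a Gorenstein local domain and $M$ a module with $\Ext^{>0}_R(M, M \oplus R) = 0$. Since $R$ is Gorenstein and $\Ext^{>0}_R(M, R) = 0$, the module $M$ is maximal Cohen-Macaulay, hence reflexive, and $M^*$ is also maximal Cohen-Macaulay. The case $d = 0$ is trivial. For $d = 1$: since $R$ is Gorenstein one has $(M^*)^\dag \cong M^{**} \cong M$; applying Theorem \ref{Thm2} with $n = 1$ and $N = M^*$ (noting that $M$ is locally free on $U_R$ because $U_R$ reduces to the generic point), the condition $\depth(M \otimes_R M^*) \geq 1$ is equivalent to $\Ext^1_R(M, M) = 0$, which holds. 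Thus $M \otimes_R M^*$ is torsionfree, and in a one-dimensional Cohen-Macaulay ring this means reflexive, so hypothesis $(2)$ (HWC in dimension one) forces $M$ free.

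For the inductive step $d \geq 2$, I would apply the inductive hypothesis at each $\p \in U_R$: each $R_\p$ is a Gorenstein local domain of dimension strictly less than $d$ (localizations of a domain are domains), and the $\Ext$-vanishing localizes, so $M_\p$ is free; hence $M$ is locally free on $U_R$. Theorem \ref{Thm2} with $n = 2$ and $N = M^*$ then gives $\depth(M \otimes_R M^*) \geq 2$ from the vanishing of $\Ext^{d-1}_R(M, M)$ and $\Ext^d_R(M, M)$, so $M \otimes_R M^*$ is reflexive, and a final application of Theorem \ref{Thm3} concludes that $M$ is free. The substantive step is the $d = 1$ case, which is where HWC genuinely enters; the $d \geq 2$ induction runs smoothly precisely because localizations of a Gorenstein local domain at nonmaximal primes are again Gorenstein local domains of smaller dimension, whereas an attempted reduction by modding out a regular element (as in Araya's technique) would destroy the domain property, which explains why the statement restricts to domains on both sides.
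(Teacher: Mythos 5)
Your proof is sound. Note that the paper does not include an in-text proof of Proposition \ref{HWC-ARC}; it simply points to \cite[(3.3), (3.4), (3.14)]{CeD}. Your reconstruction uses exactly the toolkit the paper imports for this purpose: localization at height-one primes to reduce $\hwc$ to dimension one; Theorem \ref{Thm2} (the depth-formula criterion from \cite{CeD}) to convert $\Ext$-vanishing into $\depth(M\otimes_R M^\ast)\ge 2$, hence reflexivity once $M$ is locally free on $U_R$; and Theorem \ref{Thm3} (Auslander's theorem) to descend freeness from the punctured spectrum. The induction in $(2)\Rightarrow(3)$, in which the inductive hypothesis supplies local freeness on $U_R$ and Theorem \ref{Thm3} finishes, is the standard way to run such reductions, so this almost surely mirrors the cited argument. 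Two small points worth making explicit in a written-up version: in the $d=1$ base case the chain ``$\depth\ge 1$ $\Rightarrow$ torsionfree $\Rightarrow$ reflexive'' uses that over a one-dimensional Gorenstein local ring torsionfree = MCM = reflexive, and in the $d\ge 2$ step one should record that the reflexivity of $M\otimes_R M^\ast$ requires both $\depth\ge2$ and local freeness on $U_R$ (so that the tensor product is locally free in codimension $\le 1$), as the paper itself notes via \cite[1.4.1(b)]{BH} in the proof of Proposition \ref{newprop}. Your closing remark about why the statement restricts to domains --- that Araya-style deformation by a nonzerodivisor is unavailable because it destroys the domain hypothesis, forcing a localization argument instead --- is exactly the right observation.
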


We do not know whether $(3)$ implies $(2)$ or not.
Moreover, in view of Proposition \ref{HWC-ARC}, it seems reasonable to conjecture the following.

\begin{conj}
All one-dimensional local domains satisfy $\hwc$.
\end{conj}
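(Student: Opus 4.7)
The plan is to separate the conjecture into the Gorenstein and non-Gorenstein cases, since in the Gorenstein case reflexivity of $M\otimes_R M^{\ast}$ coincides with torsionfreeness (every maximal Cohen-Macaulay module over a one-dimensional Gorenstein local ring is automatically reflexive), while in the non-Gorenstein case reflexivity is strictly stronger than torsionfreeness, and one may hope to exploit this gap. In both cases, my first step would be to pass to the completion of $R$; since $R$ is a one-dimensional local domain, $\widehat R$ is a one-dimensional complete local Cohen-Macaulay ring, and torsionfreeness, reflexivity, and freeness all behave well under the faithfully flat map $R\to\widehat R$ for finitely generated modules.

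In the Gorenstein case the statement is literally the Huneke-Wiegand question \ref{queHW}, and Proposition \ref{HWC-ARC} shows that it is equivalent to the Auslander-Reiten conjecture for all one-dimensional Gorenstein local domains. Here my attack would be to extend the matrix-factorization argument of \cite[(3.7)]{HW} from the hypersurface case to the complete intersection case as a first target. A complementary line is to combine Araya's theorem (Theorem \ref{Araya}) with the generalized Auslander-Reiten duality (Theorem \ref{d}): one would try to deduce from the torsionfreeness of $M\otimes_R M^{\ast}$ enough Ext-vanishing to force $\cext_R^{0}(M,M)=0$, then deform the setup to an even-dimensional Gorenstein isolated singularity where Theorem \ref{Araya}, or the periodicity argument of Proposition \ref{APP}, yields freeness.

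In the non-Gorenstein case I would leverage the non-triviality of the canonical module $\omega$. For a torsionfree $R$-module $M$, both $M$ and $M^{\ast}$ are maximal Cohen-Macaulay, and Theorem \ref{Thm1} supplies isomorphisms relating $(M^{\ast})^{\dag}$ to $M\otimes_R\omega$ under appropriate Ext-vanishing. The strategy is to mimic Proposition \ref{partialanswer}: use the adjunction manipulations of Lemma \ref{a} to extract an identity of the form $M\otimes_R^{\bf L}\omega\cong M$ from the reflexivity hypothesis on $M\otimes_R M^{\ast}$, then compare minimal numbers of generators to force $\omega$ to be cyclic, contradicting the non-Gorenstein hypothesis unless $M$ is free. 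The main obstacle is twofold: first, it is unclear how to pass from reflexivity of $M\otimes_R M^{\ast}$ to the required $\Tor$-vanishing without ancillary Ext-vanishing input, so some new bridging lemma is needed; and second, as Proposition \ref{HWC-ARC} makes explicit, even the Gorenstein specialization of the conjecture is equivalent to the long-open Auslander-Reiten conjecture for one-dimensional Gorenstein domains, so no honest proof of the conjecture in full generality can avoid progress on that classical problem.
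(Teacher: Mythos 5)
This statement is a \emph{conjecture}, not a theorem: the paper offers no proof of it, and indeed frames it as an open problem motivated by Proposition~\ref{HWC-ARC} and the discussion preceding it. Your proposal is therefore not comparable against a proof in the paper, because none exists, and to your credit you say as much in your closing sentence: any honest resolution of the Gorenstein specialization would already settle the long-open Auslander--Reiten conjecture for Gorenstein domains.

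A few corrections to the surrounding reasoning. First, you state that Proposition~\ref{HWC-ARC} shows the Gorenstein case of $\hwc$ is \emph{equivalent} to the Auslander--Reiten conjecture for one-dimensional Gorenstein local domains; the proposition only gives the implications $(1)\Leftrightarrow(2)\Rightarrow(3)$, and the paper explicitly notes that whether $(3)$ implies $(2)$ is unknown. So the two problems are not known to be equivalent; the $\hwc$ statement is at least as strong. Second, the Gorenstein-case strategy of ``deforming to an even-dimensional Gorenstein isolated singularity'' so as to invoke Theorem~\ref{Araya} or Proposition~\ref{APP} is not a well-defined reduction; those results require a maximal Cohen--Macaulay module locally free on the punctured spectrum over the larger ring together with specific $\cext$- or $\Ext$-vanishing, none of which is supplied by torsionfreeness of $M\otimes_RM^\ast$ over the original one-dimensional ring. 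Third, in the non-Gorenstein case, Proposition~\ref{partialanswer} and Lemma~\ref{a} apply to \emph{totally reflexive} modules, whereas a torsionfree module over a non-Gorenstein one-dimensional domain need not be totally reflexive, so the adjunction $M\otimes_R^{\bf L}\omega\cong M$ does not follow; you flag the missing $\Tor$-vanishing yourself. In short, the proposal is a reasonable survey of the landscape and correctly identifies the obstruction, but it is not and cannot be a proof, matching the status the paper assigns to this statement.
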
 

Let $R$ be a local ring, and let $\mathcal{X}(R)$ denote the category of torsionfree $R$-modules $M$ such that $M\otimes_{R}M^{\ast}$ is reflexive.
If $R$ is a complete intersection domain and $M$ is an $R$-module in $\mathcal{X}(R)$ having bounded Betti numbers, then $M$ is free \cite[(4.17)(2)]{Ce}.
We finish this section by recording one more class that satisfy $\hwc$.
This also yields another application of Theorem \ref{Araya} discussed above.

\begin{prop} \label{propHWC} 
Assume $R$ is a reduced ring such that $R=S/(x)$ where $(S, \mathfrak{n})$ is a two-dimensional complete Gorenstein normal local ring and $0\neq x \in \mathfrak{n}$. If $M\in \mathcal{X}(R)$ and $\Tor_{1}^{R}(M,M^{\ast})=0$, then $M$ is free.
\end{prop}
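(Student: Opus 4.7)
The plan is to apply Auslander's Theorem \ref{Thm3} over the two-dimensional Gorenstein local ring $S$ to the first $S$-syzygy $N:=\Omega_S M$, and then to descend to $M$. First, since $M$ is torsionfree over the reduced one-dimensional Gorenstein ring $R$, it is maximal Cohen-Macaulay, totally reflexive, and automatically locally free on $U_R$. A depth-lemma chase in $0\to N\to S^a\to M\to 0$ (with $M$ of $S$-depth one) gives $\mathrm{depth}_S N=2=\dim S$, so $N$ is MCM over $S$, hence totally reflexive (as $S$ is Gorenstein). The normality of $S$, which forces $S_\mathfrak{p}$ to be a DVR at every height-one prime, upgrades the local freeness of $M$ on $U_R$ to the local freeness of $N$ on $U_S$. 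In particular $N_\mathfrak{p}$ is free over $S_\mathfrak{p}$ whenever $\htt\mathfrak{p}\le 1$, so Theorem \ref{Thm3} is available for $N$.

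To invoke Theorem \ref{Thm3} it suffices to establish that $N\otimes_S N^\ast$ is reflexive, and by Theorem \ref{Thm2} over $S$ (with $\omega_S=S$, so $N^\dag=N^\ast$) this is equivalent to $\mathrm{Ext}^1_S(N,N)=\mathrm{Ext}^2_S(N,N)=0$. Applying Auslander--Reiten duality (Corollary \ref{ARD}) to the one-dimensional Gorenstein ring $R$ gives $\mathrm{Tor}^R_1(M,M^\ast)\cong\mathrm{Ext}^2_R(M,M)^\vee$, so the Tor hypothesis yields $\mathrm{Ext}^2_R(M,M)=0$. Tensoring $0\to N\to S^a\to M\to 0$ with $R$ over $S$ and using $\mathrm{Tor}^S_1(M,R)=M$ produces the four-term sequence that splits into the short exact sequence $0\to M\to N/xN\to \Omega_R M\to 0$, whose extension class lies in $\mathrm{Ext}^1_R(\Omega_R M,M)\cong\mathrm{Ext}^2_R(M,M)=0$; hence $N/xN\cong M\oplus\Omega_R M$. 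Combining this with the change-of-rings isomorphism $\mathrm{Ext}^{k+1}_S(M,N)\cong \mathrm{Ext}^k_R(M,N/xN)$ and the dimension-shift identity $\mathrm{Ext}^1_S(N,N)\cong\mathrm{Ext}^2_S(M,N)$ yields the identification
\[
\mathrm{Ext}^1_S(N,N)\;\cong\;\mathrm{Ext}^1_R(M,M)\oplus\lhom_R(M,M),
\]
and an analogous reduction for $\mathrm{Ext}^2_S(N,N)$. The remaining $R$-side vanishings should follow by a depth argument via Theorem \ref{Thm2} applied to the sequence $0\to \Omega_R M\otimes_R M^\ast\to (M^\ast)^a\to M\otimes_R M^\ast\to 0$, which is exact thanks to $\mathrm{Tor}^R_1(M,M^\ast)=0$ and whose ends are MCM by the reflexivity hypothesis on $M\otimes_R M^\ast$.

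Once $\mathrm{Ext}^1_S(N,N)=\mathrm{Ext}^2_S(N,N)=0$ is in hand, Theorem \ref{Thm3} (or equivalently Theorem \ref{Araya}, applied to $N$ over the 2-dimensional Gorenstein ring $S$) forces $N$ to be free. Consequently $\mathrm{pd}_S M\le 1$, so $M$ admits a presentation $0\to S^a\xrightarrow{A} S^a\to M\to 0$ together with a matrix $B$ satisfying $AB=BA=xI$; a final argument using the reflexivity of $M\otimes_R M^\ast$ forces the entries of $A$ to lie in $(x)$, whence $\bar A=0$ in $R^{a\times a}$ and $M\cong R^a$ is free. The main obstacle is the middle step: extracting $\mathrm{Ext}^1_R(M,M)=0$ and $\lhom_R(M,M)=0$ from the reflexivity of $M\otimes_R M^\ast$ is essentially the one-dimensional Huneke--Wiegand problem, and it is here that the extra hypothesis $\mathrm{Tor}^R_1(M,M^\ast)=0$ must compensate for $S$ being only Gorenstein normal rather than regular.
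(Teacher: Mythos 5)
Your overall plan---push the problem up to the two-dimensional Gorenstein ring $S$ and invoke Theorem \ref{Araya} there---is the right idea, and the paper does exactly this. But the choice of auxiliary $S$-module is where the argument breaks down. You take $N=\Omega_S M$, the first $S$-syzygy, which leads to $N/xN\cong M\oplus\Omega_R M$ and hence, via your (correct) change-of-rings and dimension-shift identifications,
\[
\Ext^1_S(N,N)\;\cong\;\Ext^1_R(M,M)\oplus\lhom_R(M,M).
\]
To get $N$ free from Theorem \ref{Araya} (with $d=\dim S=2$, so one needs $\cext^1_S(N,N)=\Ext^1_S(N,N)=0$), you must in particular prove $\lhom_R(M,M)=0$. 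But over a local ring $\lhom_R(M,M)=0$ means the identity of $M$ factors through a free module, which is \emph{equivalent} to $M$ being free. So the reduction is circular: the vanishing you need to invoke Araya's theorem is the very conclusion you are after. You flag this yourself (``extracting \dots\ $\lhom_R(M,M)=0$ \dots\ is essentially the one-dimensional Huneke--Wiegand problem''), but it is not a technicality that a depth chase with Theorem \ref{Thm2} will close; it is the whole content of the proposition.

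The paper sidesteps this by using a different $N$: not the syzygy, but an ADS \emph{lift} of $M$ to $S$. The spectral-sequence Lemma \ref{728} (or your AR-duality computation, which is also correct) turns $\Tor_1^R(M,M^\ast)=0$ into $\Ext^2_R(M,M)=0$, and by \cite[(1.7)]{ADS} this vanishing produces a finitely generated $S$-module $N$ with $x$ regular on $N$ and $N/xN\cong M$. For \emph{this} $N$ one has $\Ext^1_S(N,M)\cong\Ext^1_R(M,M)$ with no extra $\lhom$ summand; Lemma \ref{727} (Huneke--Jorgensen), applied to the MCM module $M\otimes_RM^\ast$ over the one-dimensional Gorenstein ring $R$, gives $\Ext^1_R(M,M)=0$; the long exact sequence from $0\to N\xrightarrow{x}N\to M\to 0$ plus Nakayama then yields $\Ext^1_S(N,N)=0$; and Theorem \ref{Araya} gives $N$ free, hence $M\cong N/xN$ free immediately. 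Note also that with your syzygy $N$, even if one somehow knew $N$ was $S$-free, the conclusion would only be $\pd_S M\le 1$, i.e.\ that $M$ is $R$-periodic of period two---which does not by itself force $M$ to be $R$-free; your final ``entries of $A$ lie in $(x)$'' step is unsupported. With the ADS lift, freeness of $N$ descends to $M$ trivially. The missing idea in your proposal is precisely the lifting step through $\Ext^2_R(M,M)=0$.
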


We need two lemmas for the proof of Proposition \ref{propHWC}.
The first one follows from a theorem of Huneke and Jorgensen \cite[5.9]{HJ}:

\begin{lem}\label{727}
Let $R$ be a $d$-dimensional Gorenstein local ring, and let $M,N$ be maximal Cohen-Macaulay $R$-modules.
Assume that $\Ext^{i}_{R}(N,M)$ has finite length for all $i=1, \ldots, d$.
Then $M^{\ast}\otimes_{R}N$ is maximal Cohen-Macaulay if and only if $\Ext^{i}_{R}(N,M)=0$ for all $i=1, \ldots, d$.
\end{lem}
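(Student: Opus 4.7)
The plan is to reduce the lemma to the theorem of Huneke and Jorgensen \cite[5.9]{HJ}. The key structural input is the derived-category isomorphism
$$
\RHom_R(M^{\ast}\otimes_R^{\mathbf{L}}N,R)\simeq\RHom_R(N,\RHom_R(M^{\ast},R))\simeq\RHom_R(N,M),
$$
which is available because $M^{\ast}$ is totally reflexive (as $R$ is Gorenstein and $M$ is maximal Cohen-Macaulay), so that $\RHom_R(M^{\ast},R)\simeq M^{\ast\ast}\simeq M$. Applying the standard hypercohomology spectral sequence to this identification yields
$$
E_2^{p,q}=\Ext^p_R(\Tor_q^R(M^{\ast},N),R)\Longrightarrow\Ext^{p+q}_R(N,M).
$$

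Next, I would exploit this spectral sequence as follows. Since $R$ is Gorenstein, a finitely generated $R$-module $X$ is maximal Cohen-Macaulay if and only if $\Ext^i_R(X,R)=0$ for $i=1,\dots,d$. The edge terms $E_2^{i,0}=\Ext^i_R(M^{\ast}\otimes_R N,R)$ therefore encode the maximal Cohen-Macaulay property of $M^{\ast}\otimes_R N$, while the abutment $\Ext^{i}_R(N,M)$ is precisely what appears in the stated Ext-vanishing condition. Under the finite-length hypothesis on $\Ext^i_R(N,M)$, I would localize at each $\p\in U_R$ and invoke the Huneke-Jorgensen symmetry principle to deduce that $\Tor_q^R(M^{\ast},N)$ has finite length for every $q\ge 1$. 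Local duality over the Gorenstein ring $R$ then forces $\Ext^p_R(\Tor_q^R(M^{\ast},N),R)=0$ whenever $p<d$, which kills all off-axis $E_2$-terms contributing to total degree at most $d$ (the only possible incoming or outgoing differentials at $E_r^{i,0}$ for $i\le d$ come from such off-axis positions). The spectral sequence therefore collapses to isomorphisms $\Ext^i_R(M^{\ast}\otimes_R N,R)\cong\Ext^i_R(N,M)$ for $i=1,\dots,d$, from which the equivalence in the lemma follows by reading both directions.

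The main obstacle is verifying the finite-length property of $\Tor_q^R(M^{\ast},N)$ for $q\ge 1$ from the hypothesis on $\Ext^i_R(N,M)$; this is precisely the content supplied by \cite[5.9]{HJ}, whose symmetry results over Gorenstein local rings link the vanishing of Ext and Tor in exactly the way required (after localizing at the punctured spectrum and using that $M_\p$ and $N_\p$ remain maximal Cohen-Macaulay over the smaller Gorenstein local ring $R_\p$). Once this point is granted, the remainder of the argument is a routine spectral sequence and local duality computation.
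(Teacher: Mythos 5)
The paper does not prove this lemma at all: it simply cites Huneke--Jorgensen \cite[5.9]{HJ} as its source. Your proposal tries to reconstruct a proof from scratch using the spectral sequence
$$
E_2^{p,q}=\Ext^p_R\bigl(\Tor_q^R(M^\ast,N),R\bigr)\;\Longrightarrow\;\Ext^{p+q}_R(N,M),
$$
which is the same tool the authors themselves use for the companion Lemma~\ref{728}, and the overall structure of the argument is sound. The collapse analysis is correct once the off-axis terms are known to have finite length: for $1\le i\le d$ every $E_2^{p,q}$ with $p+q=i$ and $q\ge 1$ has $p\le d-1$, so local duality kills it, and the differentials in and out of $E_2^{i,0}$ all hit positions that vanish for the same reason; hence $\Ext^i_R(M^\ast\otimes_RN,R)\cong\Ext^i_R(N,M)$ for $1\le i\le d$, from which the lemma follows by the depth characterization of maximal Cohen--Macaulay modules over a Gorenstein ring.

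The weak point is your justification of the crucial claim that $\Tor_q^R(M^\ast,N)$ has finite length for all $q\ge1$. You say this is ``precisely the content supplied by \cite[5.9]{HJ}'', but since the paper attributes Lemma~\ref{727} \emph{itself} to \cite[5.9]{HJ}, invoking that result mid-proof for a subsidiary claim risks being circular, and you give no independent argument. The good news is that no outside citation is needed: localize at $\p\in U_R$; then $\Ext^i_{R_\p}(N_\p,M_\p)=0$ for $i=1,\dots,d$. Because $M_\p$ is maximal Cohen--Macaulay over the Gorenstein local ring $R_\p$ of dimension $\dim R_\p<d$, it has finite injective dimension equal to $\dim R_\p$, so these vanishings extend to \emph{all} $i\ge1$. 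Your adjunction then gives $\RHom_{R_\p}(M_\p^\ast\otimes^{\mathbf{L}}_{R_\p}N_\p,R_\p)\simeq\Hom_{R_\p}(N_\p,M_\p)$ concentrated in degree $0$; applying $\RHom_{R_\p}(-,R_\p)$ once more and using Gorenstein duality forces $M_\p^\ast\otimes^{\mathbf{L}}_{R_\p}N_\p$ to have cohomology only in degrees $\ge0$, while by construction it lives in degrees $\le0$, so it is a module and $\Tor_q^{R_\p}(M_\p^\ast,N_\p)=0$ for $q\ge1$. With that substitution your argument gives a genuine self-contained proof, in contrast to the paper's bare citation.
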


\begin{lem}\label{728}
Let $R$ be a $d$-dimensional Gorenstein local ring, and let $M$ be a maximal Cohen-Macaulay $R$-module.
Let $n$ be a nonnegative integer, and assume that $M$ is locally free on $U_{R}$.
If $\Tor_n^R(M,M^*)=0$, then $\Ext^{n+d}_{R}(M,M)=0$.
\end{lem}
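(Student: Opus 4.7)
The plan is to combine Auslander-Reiten duality (Corollary \ref{ARD}) with a transpose/syzygy manipulation that converts a negative-degree Tate cohomology group into an ordinary $\Tor$. Since $R$ is Gorenstein, $M$ is totally reflexive, so all Tate-cohomology and (co)syzygy operations of Section~2 are available. First I would dispose of $n=0$ separately: the assumption $\Tor_0^R(M,M^\ast)=M\tensor_R M^\ast=0$ forces, over the local ring $R$, either $M=0$ or $M^\ast=0$ by Nakayama; since $M\cong M^{\ast\ast}$ for totally reflexive $M$, either way $M=0$, making the conclusion trivial. Thus I may assume $n\ge1$, whence $n+d\ge1$ and $\cext_R^{n+d}(M,M)=\Ext_R^{n+d}(M,M)$ by~\ref{2.8}(1).

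Next I would apply Corollary \ref{ARD} with $X=M$ and $i=n+d$ to obtain
$$
\Ext_R^{n+d}(M,M)=\cext_R^{n+d}(M,M)\cong\cext_R^{-n-1}(M,M)^\vee,
$$
reducing the claim to the vanishing of $\cext_R^{-n-1}(M,M)$. Unfolding the definition of Tate cohomology and using the identification $\lhom_R(A,B)\cong\Tor_1^R(\Tr A,B)$ from \cite[(3.9)]{Y} gives
$$
\cext_R^{-n-1}(M,M)=\lhom_R(\Omega^{-n-1}M,M)\cong\Tor_1^R(\Tr\Omega^{-n-1}M,M).
$$
Applying \ref{2.5}(2), Lemma \ref{b}, and the stable additivity of (co)syzygies for totally reflexive modules then yields
$$
\Tr\Omega^{-n-1}M\overset{\rm st}{\cong}\Omega^{-2}\bigl((\Omega^{-n-1}M)^\ast\bigr)\overset{\rm st}{\cong}\Omega^{-2}\Omega^{n+1}(M^\ast)\overset{\rm st}{\cong}\Omega^{n-1}(M^\ast),
$$
which, since $n\ge1$, is an honest non-negative syzygy. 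Dimension-shifting together with commutativity of $\Tor$ then gives
$$
\Tor_1^R(\Omega^{n-1}M^\ast,M)\cong\Tor_n^R(M^\ast,M)\cong\Tor_n^R(M,M^\ast)=0,
$$
completing the argument.

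There is no serious obstacle here beyond bookkeeping: one must track (co)syzygy shifts and stable isomorphisms carefully so that the transpose/syzygy chain lands on a non-negative syzygy of $M^\ast$, at which point an ordinary $\Tor$ appears and the hypothesis can be invoked; Auslander-Reiten duality and \ref{2.8}(1) then take care of the rest.
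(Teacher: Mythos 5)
Your proof is correct, but it follows a genuinely different route from the paper's. The paper argues via the change-of-rings spectral sequence $E_2^{p,q}=\Ext_R^p\bigl(\Tor_q^R(M,M^\ast),R\bigr)\Rightarrow\Ext_R^{p+q}(M,M)$: since $R$ is Gorenstein of injective dimension $d$, the page vanishes for $p>d$, and since $M$ is locally free on $U_R$ the groups $\Tor_q^R(M,M^\ast)$ for $q>0$ have finite length, forcing $E_2^{p,q}=0$ whenever $q>0$ and $p\neq d$; the sequence thus collapses to give $\Ext_R^{n+d}(M,M)\cong\Ext_R^d\bigl(\Tor_n^R(M,M^\ast),R\bigr)\cong\Tor_n^R(M,M^\ast)^\vee$ by (local) duality for finite-length modules. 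Your argument instead feeds $X=M$, $i=n+d$ into Corollary \ref{ARD}, reducing the claim to the vanishing of $\cext_R^{-n-1}(M,M)$, and then unwinds this negative-degree Tate group using $\lhom_R(-,-)\cong\Tor_1^R(\Tr(-),-)$, the identity $\Omega^2\Tr\overset{\rm st}{\cong}(-)^\ast$ from~\ref{2.5}(2), Lemma~\ref{b}, and the cosyzygy calculus of~2.6(2), landing on $\Tor_n^R(M^\ast,M)$ after a dimension shift (the $n=0$ case being dispatched separately via Nakayama, since $M\otimes_R M^\ast=0$ forces $M=0$ for a reflexive module over a local ring). Both proofs are valid. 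Your version is attractive precisely because it derives the lemma from the Auslander--Reiten duality theorem established earlier in the paper, staying entirely within the stable/Tate-cohomology formalism of Section~2 and avoiding spectral sequences; the paper's version is shorter and more self-contained but imports the universal-coefficient spectral sequence and local duality as external tools. One small bookkeeping point worth emphasizing in your write-up: all the (co)syzygy and transpose identities you invoke are only stable isomorphisms, so the passage from $\Tor_1^R(\Tr\Omega^{-n-1}M,M)$ to $\Tor_1^R(\Omega^{n-1}M^\ast,M)$ relies on $\Tor_1$ killing free summands — which it does, so there is no gap, but the justification deserves a word.
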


\begin{proof}
Without loss of generality we may assume that $R$ is complete. We will use the following spectral sequence \cite[Theorem 10.62]{Roit}:
$$
E_2^{p,q}=\Ext_{R}^p(\Tor^{R}_q(M,M^*),R)\quad\Longrightarrow\quad H^{p+q}=\Ext_{R}^{p+q}(M,M).
$$
Note that $E_2^{p,q}=0$ if $p>d$ since $R$ has injective dimension $d$.
Furthermore, for $q>0$, $\Tor^{R}_q(M,M^*)$ has finite length.
Therefore $E_2^{p,q}=0$ if $q>0$ and $p\ne d$.
Now the required result follows:
$$
\Ext_{R}^{n+d}(M,M)=H^{n+d}\cong E_2^{d,n}=\Ext_{R}^d(\Tor^{R}_n(M,M^*),R)\cong\Tor^{R}_n(M,M^*)^\vee=0,
$$
where the second isomorphism follows from the local duality theorem.
\end{proof}

\begin{proof}[Proof of Proposition \ref{propHWC}]
Since $R$ is an isolated singularity, $M$ is locally free on $U_{R}$.
By Lemma \ref{728}, the vanishing of $\Tor_1^R(M,M^*)$ forces the vanishing of $\Ext^{2}_{R}(M,M)$. Therefore \cite[(1.7)]{ADS} shows that $M$ lifts to $S$, that is, there exists a finitely generated $S$-module $N$ such that $x$ is $N$-regular and that $N/xN\cong M$. Furthermore, applying \cite[Lemma 2(ii), page 140]{Mat}, we see that $\Ext_{S}^{1}(N,M) \cong \Ext_{R}^{1}(M,M)$. 

By assumption, $M\otimes_RM^\ast$ is reflexive.
As $R$ is $1$-dimensional, $M\otimes_RM^\ast$ is maximal Cohen-Macaulay over $R$.
Lemma \ref{727} implies $\Ext_{R}^{1}(M,M)=0$, and hence $\Ext_{S}^{1}(N,M)=0$.
Note also that the $S$-module $N$ is maximal Cohen-Macaulay.
Since $S$ is an isolated singularity, $N$ is locally free on $U_{S}$.
From the short exact sequence $0\rightarrow N \stackrel{x}{\rightarrow} N \rightarrow M \rightarrow 0$ we obtain the surjection $\Ext_{S}^{1}(N,N) \stackrel{x}{\rightarrow}\Ext_{S}^{1}(N,N)$.
Nakayama's lemma shows $\Ext_{S}^{1}(N,N)=0$.
By Theorem \ref{Araya} the $S$-module $N$ is free, and so is the $R$-module $M$.
\end{proof}

An example we discussed before shows that the vanishing of $\Tor_{1}^{R}(M,M^{\ast})$ in Proposition \ref{propHWC} is necessary: for $R=k[[X,Y]]/(XY)$ and $M=R/(X)$, setting $S=k[X,Y]$, we have that $R=S/(XY)$, $M\in \mathcal{X}(R)$, $\Tor_{1}^{R}(M,M^{\ast})\cong k\ne0$ and $M$ is nonfree.

\section*{Acknowledgments}
The authors would like to thank Tokuji Araya, Luchezar Avramov, Lars Christensen, Hailong Dao, Kosmas Diveris, Craig Huneke, Graham Leuschke, Greg Piepmeyer and Roger Wiegand for their valuable comments during the preparation of this paper.
This work was done during the visit of the first author to University of Nebraska-Lincoln in May and July, 2011.
He is grateful for their kind hospitality.
The authors also thank the referee for his/her careful reading and useful suggestions.

\end{document}